\documentclass[12pt,a4paper]{article}

\usepackage[a4paper,left=2.5cm,right=2.5cm,
top=2.5cm,bottom=2.5cm]{geometry}

\usepackage{amsmath,amssymb,amsfonts,amsthm}
\usepackage{mathrsfs}
\usepackage{graphicx}
\usepackage{booktabs}
\usepackage{multirow}
\usepackage{xcolor}
\usepackage{enumerate}
\usepackage{appendix}
\usepackage{hyperref}

\newtheorem{theorem}{Theorem}[section]

\newtheorem{lemma}[theorem]{Lemma}

\theoremstyle{definition}
\newtheorem{definition}[theorem]{Definition}

\theoremstyle{remark}

\numberwithin{equation}{section}

\title{\bfseries
 Generalized Chen's inequalities for Riemannian submersions and
Riemannian maps with Applications
}

\author{
Ravindra Singh
\\[2ex]
Department of Mathematics,\\ Banaras Hindu University,\\
 Varanasi 221005, India\\
E-mail: khanelrs@bhu.ac.in\\
ORCID: 0009-0009-1270-3831
}

\date{}

\begin{document}

\maketitle

\begin{abstract}
\noindent In this paper, we establish generalized B.-Y. Chen inequalities for Riemannian submersions and Riemannian maps between Riemannian manifolds by employing the generalized $\delta$-invariants introduced by Chen. We derive optimal inequalities involving the generalized $\delta$-invariants associated with the vertical and horizontal distributions together with extrinsic invariants determined by the second fundamental tensors of the submersion and the Riemannian map. Furthermore, we characterize the equality cases through precise algebraic conditions on the corresponding shape operators, providing their geometric interpretation. As applications, we obtain explicit generalized Chen inequalities for Riemannian submersions and Riemannian maps whose total or target manifolds are real space forms and complex space forms. These results extend the classical Chen inequalities as well as several recent Chen-type inequalities for Riemannian submersions and Riemannian maps available in the literature.
\end{abstract}

\noindent{\bf Keywords:}
Riemannian manifold; space form; sectional curvature;
scalar curvature; Casorati curvature; Riemannian submersion

\tableofcontents

\section{Introduction}
B.-Y. Chen introduced, several years ago, a fundamental Riemannian invariant
denoted by $\delta $. In \cite{Chen_93_AM}, he established a general sharp
inequality relating this invariant to the squared mean curvature of
arbitrary submanifolds in Riemannian space forms. Upon applying this
inequality, Chen provided solutions to some problems (see \cite%
{Chen_93_AM,Chen_95_RM,Chen_00_JJM}). Submanifolds attaining equality in
this inequality have been the subject of extensive study in recent years
(see \cite{Chen_2011}). In \cite{Chen_00_JJM}, Chen further extended the
invariant $\delta $ on an $r$-dimensional Riemannian manifold to two
sequences of Riemannian invariants, denoted by $\delta (r_{1},\ldots ,r_{k})$
and $\widehat{\delta }(r_{1},\ldots ,r_{k})$, where the integers $%
r_{1},\ldots ,r_{k}$ satisfy 
\[
2\leq r_{1}\leq r_{k}<r. 
\]%
He also generalized the original sharp inequality in \cite{Chen_93_AM} to
new inequalities involving these extended invariants.

For integers $r\geq 2$ and $k\geq 0$, let ${\cal S}(r;k)$ denote the finite
set of all $k$-tuples $(r_{1},\ldots ,r_{k})$ satisfying 
\[
2\leq r_{1}\leq r_{k}<r. 
\]%
In particular, 
\[
{\cal S}(r,0)=\{\emptyset \}, 
\]%
and we define 
\[
{\cal S}(r)=\bigcup_{k\geq 0}{\cal S}(r;k). 
\]%
The two families of Riemannian invariants 
\[
\delta (r_{1},\ldots ,r_{k})(p)\quad \text{and}\quad \widehat{\delta }%
(r_{1},\ldots ,r_{k})(p), 
\]%
for $(r_{1},\ldots ,r_{k})\in {\cal S}(r)$, are defined at a point $p\in M_{%
\text{total}}$ by 
\[
2\delta (r_{1},\ldots ,r_{k})(p)=\tau (p)-\inf \{\tau (L_{1})+\cdots +\tau
(L_{k})\}, 
\]%
\[
2\widehat{\delta }(r_{1},\ldots ,r_{k})(p)=\tau (p)-\sup \{\tau
(L_{1})+\cdots +\tau (L_{k})\}, 
\]%
where $L_{1},\ldots ,L_{k}$ are mutually orthogonal subspaces of $T_{p}M_{%
\text{total}}$ such that 
\[
\dim L_{j}=r_{j},\qquad j=1,\ldots ,k. 
\]%
Clearly, 
\[
\delta (r_{1},\ldots ,r_{k})\geq \widehat{\delta }(r_{1},\ldots ,r_{k}). 
\]%
In this setting, the scalar curvature $\tau $ corresponds to 
\[
\delta (\emptyset )=\widehat{\delta }(\emptyset ) 
\]%
(i.e., $k=0$), while the original invariant $\delta $ introduced in \cite%
{Chen_93_AM,Chen_95_RM} is precisely $\delta (2)$ (with $k=1$ and $r_{1}=2$%
). It is worth noting that the invariants $\delta (r_{1},\ldots ,r_{k})$ and 
$\widehat{\delta }(r_{1},\ldots ,r_{k})$ with $k>0$ differ essentially in
nature from the scalar curvature.\newline
Riemannian submersions are smooth maps and have applications in various
fields, including physics, mechanics, relativity, spacetime, robotics,
supergravity, superstring, Kaluza-Klein, Yang-Mills theories, etc. The
concept of a Riemannian map generalizes that of a submanifold and a
Riemannian submersion \cite{Fischer_1992}. It has rich geometry and
applications \cite{GRK_book, Sahin_book}, and satisfies the eikonal equation
(a bridge between geometric and physical optics). In addition, Riemannian
maps allow us to compare the geometric properties of the source and target
Riemannian manifolds. Due to the interesting properties mentioned above,
Riemannian submersions and Riemannian maps were investigated with various
structures (see \cite{Falcitelli_04_WS, Sahin_book} and references therein).
Motivated by Chen's inequalities for submanifolds, several authors have
established analogous results for Riemannian submersions by relating
intrinsic invariants with extrinsic invariants induced by the submersion and
Riemannian map. In this direction, Recently, Singh and Tripathi \cite%
{Singh_Tripathi_26} introduced the concept of B.-Y. Chen's inequality for
Riemannian submersion between two Riemannian manifold and their applications
to Riemannian submersion whose total space are real, complex and generalized
Sasakian space forms. For Riemannian map, the Chen's first (B.-Y. Chen's)
inequality obtained to real space form by Sahin \cite{Sahin_2016}, for
complex space form \cite{Meena}. After that many researcher obtained this
inequality for Riemannian maps. Singh et al. \cite{SMM_CF_2025} obtained
Chen's first (B.-Y. Chen's) inequality for Riemannian maps between two
Riemannian manifold and its application for Riemannian maps to generalized
complex and generalized Sasakian space forms. Motivated by the above study,
in this paper we establish generalized Chen's inequality for Riemannian
submersion and Riemannian maps between two Riemannian manifolds and their
application to real space form and complex space form. \newline
This paper is organized as follows. In section \ref{sec 2}, we give some
basic definitions and notations related to Riemannian submersions. In
section \ref{sec 3}, we establish we establish generalized Chen's inequality
for Riemannian submersion between two Riemannian manifolds and their
application to real space form and complex space form. In section \ref{sec 4}%
, we give some basic definitions and notations related to Riemannian maps.
In section \ref{sec 3}, we establish we establish generalized Chen's
inequality for Riemannian maps between two Riemannian manifolds and their
application to real space form and complex space form.
\section{Riemannian submersion\label{sec 2}}

Let $\left( M_{1},g_{1}\right) $ and $\left( M_{2},g_{2}\right) $ be two
Riemannian manifolds of dimension $n$ and $m$, respectively. A surjective
smooth map $\pi :\left( M_{1},g_{1}\right) \rightarrow \left(
M_{2},g_{2}\right) $ is called a {\em Riemannian submersion} if its
differential map $\pi _{\ast p}:T_{p}M_{1}\rightarrow T_{\pi (p)}M_{2}$ is
surjective for all $p\in M_{1}$ and $\pi _{\ast p}$ preserves the length of
all horizontal vectors at $p$ \cite{Neill_66_MMJ}. The tangent vectors to
the fibers are vertical, while the orthogonal vectors to the fibers are
horizontal. Thus $TM_{1}$, the decomposition is a direct sum of two
distributions: the vertical distribution ${\cal V}=\ker \pi _{\ast }$ and
its orthogonal complement (known as the horizontal distribution) ${\cal H}%
=\left( \ker \pi _{\ast }\right) ^{\perp }$. Similarly, for each $p\in M_{1}$
the vertical and horizontal spaces in $T_{p}M_{1}$ are denoted by ${\cal V}%
_{p}=\left( \ker \pi _{\ast }\right) _{p}$ and ${\cal H}_{p}=\left( \ker \pi
_{\ast }\right) _{p}^{\perp }$, respectively. \newline
Let $\{V_{1},\ldots ,V_{r},U_{1},\ldots ,U_{s}\}$ be an orthonormal basis of 
$T_{p}M_{1}$ such that $\{V_{1},\ldots ,V_{r}\}$ and $\{U_{1},\ldots
,U_{s}\} $ are orthonormal bases of the vertical space ${\cal V}_{p}$ and
the horizontal space ${\cal H}_{p}$, respectively. Consider the planes $\Pi =%
{\rm span}\{V_{1},V_{2}\}$ and ${\Bbb P}={\rm span}\{U_{1},U_{2}\}$. We
shall use these bases throughout this paper.

\subsection{O'Neill tensors}

The geometry of Riemannian submersions is characterized by O'Neill's tensors 
{${\cal T}$} and {${\cal A}$} defined for vector fields $E$, $F$ on $M_{1}$
by 
\begin{eqnarray*}
{\cal T}\left( E,F\right) &=&{{\cal T}_{E}F=h\nabla _{vE}vF+v\nabla _{vE}hF},
\\
{\cal A}\left( E,F\right) &=&{\cal A}_{E}F=v{\nabla _{hE}hF+h\nabla _{hE}vF,}
\end{eqnarray*}%
where $\nabla $ is the Levi-Civita connection on $g_{1}$, $h$ and $v$ are
projection morphisms of $E$, $F\in TM_{1}$ to ${\cal H}$ and ${\cal V}$,
respectively. We also have%
\begin{eqnarray*}
{{\cal T}^{{\cal H}}}:{\cal V}\times {{\cal V}\rightarrow {\cal H}}, \\
{{\cal T}^{{\cal V}}}{:{\cal V}\times {{\cal H}\rightarrow {\cal V}}}, \\
{{\cal A}^{{\cal H}}}{:{\cal H}\times {{\cal V}\rightarrow {\cal H}}}, \\
{{\cal A}^{{\cal V}}}{:{\cal H}\times {{\cal H}\rightarrow {\cal V}}},
\end{eqnarray*}%
and 
\[
g_{1}\left( {{\cal T}^{{\cal H}}}\left( V_{i},V_{j}\right) ,U_{\alpha
}\right) =\left( {{\cal T}^{{\cal H}}}\right) _{ij}^{\alpha },\quad
g_{1}\left( {{\cal A}^{{\cal V}}}\left( U_{\alpha },U_{\beta }\right)
,V_{i}\right) =\left( {{\cal A}^{{\cal V}}}\right) _{\alpha \beta }^{i}, 
\]%
where $V_{i},V_{j}\in {\cal V}$, $U_{\alpha },U_{\beta }\in {\cal H}$. The
O'Neill tensors also satisfy: 
\[
{\cal A}_{U_{i}}^{{\cal V}}U_{j}=-{\cal A}_{U_{j}}^{{\cal V}}U_{i},\quad 
{\cal T}_{V_{i}}^{{\cal H}}V_{j}={\cal T}_{V_{j}}^{{\cal H}}V_{i}, 
\]%
and 
\[
g_{1}\left( {\cal T}_{E}F,G\right) =-g_{1}\left( F,{\cal T}_{E}G\right)
,\quad g_{1}\left( {\cal A}_{E}F,G\right) =-g_{1}\left( F,{\cal A}%
_{E}G\right) , 
\]%
where $V_{i},V_{j}\in {\cal V}$, $U_{i},U_{j}\in {\cal H}$ and $E$,
\thinspace $F$, $G\in TM_{1}$ \cite{Neill_66_MMJ}. The {\it mean curvature
vector field }$H${\it \ of the fibers} of $\pi $ is defined as \cite%
{Falcitelli_04_WS} 
\begin{equation}
H(p)=\frac{1}{r}N,\quad N=\sum_{i=1}^{r}{\cal T}^{{\cal H}}\left(
V_{i},V_{i}\right) .  \label{eq-P-(9)}
\end{equation}%
The horizontal divergence of any vector field $U$ on ${\cal H}$ is given by $%
\breve{\delta}\left( U\right) $ and defined by%
\[
\breve{\delta}\left( U\right) =\sum_{i=1}^{s}g\left( \nabla
_{U_{i}}U,U_{i}\right) . 
\]%
From \cite{Gulbahar_Meric_Kilic_17_KJM}, we have 
\begin{equation}
\breve{\delta}\left( N\right) =\sum_{j=1}^{r}\sum_{i=1}^{s}g\left( \left(
\nabla _{U_{i}}{\cal T}\right) \left( V_{j},V_{j}\right) ,U_{i}\right) .
\label{eq-P-(9.3.1)}
\end{equation}

\subsection{Relations between Riemann curvature tensor fields}

Let $R^{M_{1}}$, $R^{M_{2}}$, $R^{{\cal V}}$, and $R^{{\cal H}}$ denote the
Riemann curvature tensor fields corresponding to $M_{1}$, $M_{2}$, ${\cal V}$%
, and ${\cal H}$, respectively. Then, we have 
\begin{eqnarray}
R^{M_{1}}\left( F_{1},F_{2},F_{3},F_{4}\right) &=&R^{{\cal V}}\left(
F_{1},F_{2},F_{3},F_{4}\right) +g_{1}\left( {\cal T}_{F_{1}}F_{4},{\cal T}%
_{F_{2}}F_{3}\right)  \nonumber \\
&&-g_{1}\left( {\cal T}_{F_{2}}F_{4},{\cal T}_{F_{1}}F_{3}\right) ,
\label{eq-P-(10)}
\end{eqnarray}%
\begin{eqnarray}
R^{M_{1}}\left( X,Y,Z,W\right) &=&R^{{\cal H}}\left( X,Y,Z,W\right)
-2g_{1}\left( {\cal A}_{X}Y,{\cal A}_{Z}W\right)  \nonumber \\
&&+g_{1}\left( {\cal A}_{Y}Z,{\cal A}_{X}W\right) -g_{1}\left( {\cal A}_{X}Z,%
{\cal A}_{Y}W\right) ,  \label{eq-P-(11)}
\end{eqnarray}%
\begin{eqnarray}
R^{M_{1}}\left( X,F_{1},Y,F_{2}\right) &=&g_{1}\left( \left( \nabla _{X}%
{\cal T}\right) \left( F_{1},F_{2}\right) ,Y\right) +g_{1}\left( \left(
\nabla _{F_{1}}{\cal A}\right) \left( X,Y\right) ,F_{2}\right)  \nonumber \\
&&-g_{1}\left( {\cal T}_{F_{1}}X,{\cal T}_{F_{2}}Y\right) +g_{1}\left( {\cal %
A}_{Y}F_{2},{\cal A}_{X}F_{1}\right)  \label{eq-P-(12)}
\end{eqnarray}%
for all $X,Y,Z,W\in {\cal H}$\ and $F_{1},F_{2},F_{3},F_{4}\in {\cal V}$.
Here, $\nabla $ is the Levi-Civita connection with respect to the metric $%
g_{1}$ \cite{Falcitelli_04_WS,Neill_66_MMJ}.

\subsubsection{Some notations}

The scalar curvatures defined by

\begin{equation}
2\tau _{{\cal V}}^{{\cal V}}\left( p\right) =\sum_{i,j=1}^{r}R^{{\cal V}%
}\left( V_{i},V_{j},V_{j},V_{i}\right) ,\ 2\tau _{{\cal H}}^{{\cal H}}\left(
p\right) =\sum_{i,j=1}^{s}R^{{\cal V}}\left( U_{i},U_{j},U_{j},U_{i}\right)
\label{eq-P-(9.1)}
\end{equation}%
\begin{equation}
2\tau _{{\cal V}}^{{\cal V}}\left( \Pi _{j}\right) =\sum_{i,j=1}^{r_{j}}R^{%
{\cal V}}\left( V_{i},V_{j},V_{j},V_{i}\right) ,\ 2\tau _{{\cal H}}^{{\cal H}%
}\left( {\Bbb P}_{j}\right) =\sum_{i,j=1}^{s_{j}}R^{{\cal V}}\left(
U_{i},U_{j},U_{j},U_{i}\right)  \label{eq-P-(9.2)}
\end{equation}%
\begin{equation}
2\tau _{{\cal V}}^{M_{1}}(\Pi
_{j})=\sum\limits_{i,j=1}^{r_{j}}R^{M_{1}}\left(
V_{i},V_{j},V_{j},V_{i}\right) ,\ 2\tau _{{\cal H}}^{M_{1}}({\Bbb P}%
_{j})=\sum\limits_{i,j=1}^{s_{j}}R^{M_{1}}\left(
U_{i},U_{j},U_{j},U_{i}\right) ,  \label{eq-P-(9.2.0)}
\end{equation}%
\begin{equation}
2\tau _{{\cal V}}^{M_{1}}(p)=\sum\limits_{i,j=1}^{r}R^{M_{1}}\left(
V_{i},V_{j},V_{j},V_{i}\right) ,\ 2\tau _{{\cal H}}^{M_{1}}(p)=\sum%
\limits_{i,j=1}^{s}R^{M_{1}}\left( U_{i},U_{j},U_{j},U_{i}\right) ,
\label{eq-P-(9.2.1)}
\end{equation}%
\begin{equation}
\tau ^{M_{1}}(p)=\tau _{{\cal V}}^{M_{1}}(p)+\tau _{{\cal H}%
}^{M_{1}}(p)+\sum_{i=1}^{s}\sum_{j=1}^{r}R^{M_{1}}\left(
U_{i},V_{j},V_{j},U_{i}\right) .  \label{eq-scal-M1}
\end{equation}%
Also we have%
\begin{equation}
\left\Vert {\cal T}^{{\cal V}}\right\Vert
^{2}=\sum_{j=1}^{r}\sum_{i=1}^{s}g\left( {\cal T}_{V_{j}}^{{\cal V}}U_{i},%
{\cal T}_{V_{j}}^{{\cal V}}U_{i}\right) ,\ \left\Vert {\cal T}^{{\cal H}%
}\right\Vert ^{2}=\sum_{i,j=1}^{r}g\left( {\cal T}_{V_{i}}^{{\cal V}}V_{j},%
{\cal T}_{V_{i}}^{{\cal V}}V_{j}\right) ,  \label{eq-P-(14)}
\end{equation}%
\begin{equation}
\left\Vert {\cal A}^{{\cal V}}\right\Vert ^{2}=\sum_{i,j=1}^{s}g\left(
A_{U_{i}}^{{\cal V}}U_{j},A_{U_{i}}^{{\cal V}}U_{j}\right) ,\quad \left\Vert 
{\cal A}^{{\cal H}}\right\Vert ^{2}=\sum_{j=1}^{r}\sum_{i=1}^{s}g\left(
A_{U_{i}}^{{\cal H}}V_{j},A_{U_{i}}^{{\cal H}}V_{j}\right) ,
\label{eq-P-(14.1)}
\end{equation}%
\[
\left\Vert Q\right\Vert ^{2}=\sum\limits_{i=1}^{r}\Vert QV_{i}\Vert
^{2}=\sum\limits_{i,j=1}^{r}\left( g_{2}(QV_{i},V_{j})\right) ^{2},\
\left\Vert P\right\Vert ^{2}=\sum\limits_{i=1}^{s}\Vert PU_{i}\Vert
^{2}=\sum\limits_{i,j=1}^{s}\left( g_{2}(PU_{i},U_{j})\right) ^{2}, 
\]%
\[
\left\Vert P^{{\cal V}}\right\Vert ^{2}=\sum\limits_{i=1}^{r}\Vert
PV_{i}\Vert ^{2}=\sum\limits_{i=1}^{r}\sum\limits_{j=1}^{s}\left(
g_{2}(PV_{i},U_{j})\right) ^{2}. 
\]

\begin{definition}
The Kulkarni-Nomizu product $T_{1}\circledast T_{2}$ of $\left( 0,2\right) $%
-tensor fields $T_{1}$ and $T_{2}$ in a smooth manifold $M$ is a $\left(
0,4\right) $-tensor field defined by \cite{Tripathi_25_CMAMS} 
\begin{eqnarray}
\left( T_{1}\circledast T_{2}\right) \left( X,Y,Z,W\right) &=&T_{1}\left(
Y,Z\right) T_{2}\left( X,W\right) -T_{1}\left( X,Z\right) T_{2}\left(
Y,W\right)  \nonumber \\
&&+T_{2}\left( Y,Z\right) T_{1}\left( X,W\right) -T_{2}\left( X,Z\right)
T_{1}\left( Y,W\right)  \label{eq-KN-product}
\end{eqnarray}%
for all vector fields $X$, $Y$, $Z$, $W$ on $M$.
\end{definition}

\begin{definition}
The symmetric product of any two $\left( 0,2\right) $-tensor fields $T_{1}$
and $T_{2}$ is a $\left( 0,4\right) $-tensor field $T_{1}\circledcirc T_{2}$
defined by \cite{Tripathi_25_CMAMS} 
\begin{equation}
T_{1}\circledcirc T_{2}=T_{1}\circledast T_{2}+T_{2}\circledast T_{1}
\label{eq-KN-symm-product}
\end{equation}
\end{definition}

Now, let $\left( M,g\right) $ be an $n$-dimensional Riemannian manifold. Let 
$T$ be a Kulkarni-Nomizu tensor field so that it satisfies%
\begin{equation}
T\left( X,Y,Z,W\right) =-T\left( Y,X,Z,W\right) ,  \label{eq-KN-T-1}
\end{equation}%
\begin{equation}
T\left( X,Y,Z,W\right) =-T\left( X,Y,W,Z\right) ,  \label{eq-KN-T-2}
\end{equation}%
\begin{equation}
T\left( X,Y,Z,W\right) =T\left( Z,W,X,Y\right) ,  \label{eq-KN-T-3}
\end{equation}%
\begin{equation}
T\left( X,Y,Z,W\right) +T\left( Y,Z,X,W\right) +T\left( Z,X,Y,W\right) =0
\label{eq-KN-T-4}
\end{equation}%
\begin{equation}
T\left( X,Y,Z,W\right) +T\left( X,Z,W,Y\right) +T\left( X,W,Y,Z\right) =0
\label{eq-KN-T-5}
\end{equation}%
for all vector fields $X$, $Y$, $Z$ and $W$ on $M$. It is observed that if $%
T $ satisfies any two of the three conditions (\ref{eq-KN-T-1}), (\ref%
{eq-KN-T-2}), (\ref{eq-KN-T-3}) and any one of the two conditions (\ref%
{eq-KN-T-4}), (\ref{eq-KN-T-5}), then it also satisfies the remainning two
relations.

\begin{definition}
{\rm (\cite[Kobayashi and Nomizu, 1963, p. 209]{Kobayashi_Nomizu_1963},\cite[%
Takahashi, 1972]{Takahashi_72_KJSM})} A Riemannian manifold $\left(
M,g\right) $ with constant sectional curvature $c$ is called a real space
form, and its Riemann-Christoffel curvature tensor field $R^{M}$ is given by 
{\rm \cite[Tripathi, 2026, p. 29]{Tripathi_2026}} 
\begin{equation}
R^{M}=\frac{c}{2}\left( g\circledast g\right)  \label{eq-RSF}
\end{equation}
\end{definition}

\begin{definition}
{\rm \cite[Ogiue 1972]{Ogiue_72_JMSJ}} Let $M$ be an almost Hermitian
manifold with an almost Hermitian structure $\left( J,g\right) $. Then $M$
becomes a K\"{a}hler manifold if $\nabla J=0$. A k\"{a}hler manifold with
constant holomorphic sectional curvature $c$ is called a complex space form $%
M\left( c\right) $, and its Riemann-Christoffel curvature tensor field is
given by {\rm \cite[Tripathi, 2026, p. 79]{Tripathi_2026}} 
\begin{equation}
R^{M}=\frac{c}{8}\left( g\circledast g\right) +\frac{c}{4}\left\{ \frac{1}{2}%
\left( J^{b}\circledast J^{b}\right) -\left( J^{b}\circledcirc J^{b}\right)
\right\},  \label{eq-GCSF}
\end{equation}
where $J^{\flat}$ denotes the $(0,2)$-tensor field associated with the $%
(1,1) $-tensor field $J$, defined by 
\[
J^{\flat}(X,Y)=g(X,JY), 
\]
for all vector fields $X$, $Y$ on $M$. Moreover, for any vector field $X$ on 
$M$, we write 
\begin{equation}
JX=PX+QX,  \label{decompose_GCSF_RM}
\end{equation}%
where $PX\in {\cal H}$, $QX\in {\cal V}$.
\end{definition}

\begin{lemma}
{\rm \cite[Lemma 3.1, Chen 1993]{Chen_93_AM}} \label{Lemma 2}If $k>2$ and $%
a_{1},\ldots ,a_{k},b$ are real numbers such that 
\begin{equation}
\left( \sum_{i=1}^{k}a_{i}\right) ^{2}=\left( k-1\right) \left(
\sum_{i=1}^{k}a_{i}^{2}+b\right)  \label{eq-P-(13)}
\end{equation}%
then 
\[
2a_{1}a_{2}\geq b. 
\]%
The equality holds if and only if $a_{1}+a_{2}=a_{3}=\cdots =a_{k}$.
\end{lemma}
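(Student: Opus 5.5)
We prove Lemma~\ref{Lemma 2} by an elementary algebraic argument. The plan is to rewrite the hypothesis (\ref{eq-P-(13)}) as the statement that $b-2a_1a_2$ equals a nonnegative quantity, obtained from the Cauchy--Schwarz inequality applied to the $k-1$ numbers $a_1+a_2,a_3,\ldots,a_k$. Note that $k>2$ means there are at least three $a_i$, so the list $a_1+a_2,a_3,\ldots,a_k$ has $k-1\geq 2$ entries.

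First we group $a_1$ and $a_2$. We have $\sum_{i=1}^k a_i=(a_1+a_2)+a_3+\cdots+a_k$, and
\[
\sum_{i=1}^k a_i^2=(a_1+a_2)^2+a_3^2+\cdots+a_k^2-2a_1a_2 .
\]
Substituting into (\ref{eq-P-(13)}) gives
\[
\Bigl((a_1+a_2)+\sum_{i=3}^k a_i\Bigr)^2=(k-1)\Bigl((a_1+a_2)^2+\sum_{i=3}^k a_i^2\Bigr)+(k-1)(b-2a_1a_2).
\]

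Next we apply the Cauchy--Schwarz inequality, $(\sum_{j=1}^{k-1}x_j)^2\le (k-1)\sum_{j=1}^{k-1}x_j^2$, to $x_1=a_1+a_2$, $x_j=a_{j+1}$ for $2\le j\le k-1$. This shows that the left-hand side is at most $(k-1)\bigl((a_1+a_2)^2+\sum_{i\ge3}a_i^2\bigr)$. Hence $(k-1)(b-2a_1a_2)\le 0$. Since $k-1>0$, it follows that $2a_1a_2\ge b$.

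For the equality case: $2a_1a_2=b$ holds if and only if equality holds in the Cauchy--Schwarz step, and this happens exactly when all the $x_j$ are equal, that is, $a_1+a_2=a_3=\cdots=a_k$. Conversely, if these equalities hold, the computation above run backwards gives $2a_1a_2=b$.

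The only step needing care is the bookkeeping identity expressing $\sum_i a_i^2$ through $(a_1+a_2)^2$; the remainder is the standard Cauchy--Schwarz inequality and its equality case.
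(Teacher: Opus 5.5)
Your argument is correct and complete: regrouping $\sum_{i=1}^k a_i^2=(a_1+a_2)^2+\sum_{i\ge 3}a_i^2-2a_1a_2$ and applying Cauchy--Schwarz to the $k-1$ numbers $a_1+a_2,a_3,\ldots,a_k$ gives both the inequality $2a_1a_2\ge b$ and the equality condition $a_1+a_2=a_3=\cdots=a_k$. The paper gives no proof of its own and only cites Chen's Lemma 3.1. To my recollection, Chen's original proof is this same Cauchy--Schwarz regrouping, so your route matches the cited source.
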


\section{Generalized Chen's invariant for Riemannian submersion along
vertical distribution \label{sec 3}}

\begin{theorem}
\label{Theorem 2} Let $\pi :\left( M_{1},g_{1}\right) \rightarrow \left(
M_{2},g_{2}\right) $ be a Riemannian submersion. Let $r_{1},\ldots ,r_{k}$
be intigers $\geq 2$ satisfying $r_{1}<r$, $r_{1}+\cdots +r_{k}\leq r$. For $%
p\in M_{1}$, let $L_{j}$ be an $r_{j}$-plane section of ${\cal V}_{p}$, $%
j=1,\ldots ,k$. Then we have%
\begin{equation}
\tau _{{\cal V}}^{\ker \pi _{\ast }}(p)-\sum_{j=1}^{k}\tau _{{\cal V}}^{\ker
\pi _{\ast }}\left( L_{j}\right) \geq \tau _{{\cal V}}^{M_{1}}(p)-%
\sum_{j=1}^{k}\tau _{{\cal V}}^{M_{1}}\left( L_{j}\right) -c\left(
r_{1},\ldots ,r_{k}\right) \left\Vert H\right\Vert ^{2},  \label{eq-GCFI}
\end{equation}%
for any $k$-tuple $\left( r_{1},\ldots ,r_{k}\right) \in S\left( r\right) $,
and equality holds at $p\in M_{1}$ if and only if there exist an orthonormal
basis $\{V_{1},\ldots ,V_{r}\}$ of $\left( \ker \pi _{\ast }\right) _{p}$,
and $\{h_{1},\ldots ,h_{s}\}$ be an orthonormal basis of the normal space $%
(\ker \pi _{\ast })_{p}^{\perp }$ such that%
\begin{eqnarray*}
\sum_{\alpha _{1}\in D_{1}}a_{\alpha _{1}} &=&\sum_{\alpha _{2}\in
D_{2}}a_{\alpha _{2}}=\cdots =\sum_{\alpha _{r}\in D_{r}}a_{\alpha
_{r}}=a_{_{\left( \sum_{j=1}^{k}r_{j}\right) +1}}=\cdots =a_{r}, \\
\left( {\cal T}^{{\cal H}}\right) _{\lambda \mu }^{1} &=&0,\quad \left(
\lambda ,\mu \neq \lambda \right) \notin D^{2},\quad \lambda ,\mu \in
\left\{ 1,\ldots ,r\right\} , \\
\left( {\cal T}^{{\cal H}}\right) _{\lambda \mu }^{\ell } &=&0,\quad \ell
=2,\ldots ,s,\quad \lambda ,\mu \in \left\{ 1,\ldots ,r\right\} , \\
\sum_{\alpha _{j}\in D_{j}}\left( {\cal T}^{{\cal H}}\right) _{\alpha
_{j}\alpha _{j}}^{\ell } &=&0,\quad \ell =2,\ldots ,s,\quad j=1,\ldots k.
\end{eqnarray*}
\end{theorem}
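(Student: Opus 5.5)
Following Chen's argument in \cite{Chen_00_JJM}, I treat the fibres as submanifolds with second fundamental form ${\cal T}^{{\cal H}}$, use the Gauss-type relation (\ref{eq-P-(10)}) in place of the Gauss equation, and then apply an algebraic lemma in the spirit of Lemma \ref{Lemma 2}. Here $\tau^{\ker\pi_{\ast}}_{{\cal V}}$ denotes the intrinsic scalar curvature of the fibre, i.e. $\tau_{{\cal V}}^{{\cal V}}$ in (\ref{eq-P-(9.1)}), and $c(r_1,\ldots,r_k)=\frac{r^2\,(r+k-1-\sum_j r_j)}{2\,(r+k-\sum_j r_j)}$, the constant used by Chen. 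I write $(r_1,\ldots,r_k)$ with $r_1+\cdots+r_k\le r$ and set $D_1=\{1,\ldots,r_1\}$, $D_2=\{r_1+1,\ldots,r_1+r_2\}$, and so on.

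\emph{Step 1: the scalar-curvature identities.} Putting $F_1=F_4=V_i$ and $F_2=F_3=V_j$ in (\ref{eq-P-(10)}) and summing over $i,j=1,\ldots,r$ gives
\[
2\tau_{{\cal V}}^{M_{1}}(p)=2\tau_{{\cal V}}^{{\cal V}}(p)+\|{\cal T}^{{\cal H}}\|^{2}-r^{2}\|H\|^{2},
\]
with the sign fixed by the convention in (\ref{eq-P-(10)}). Summing only over indices in $D_j$ gives the analogous identity for $L_j$:
\[
2\tau_{{\cal V}}^{M_{1}}(L_j)=2\tau_{{\cal V}}^{{\cal V}}(L_j)+\sum_{\ell}\Big[\sum_{\alpha,\beta\in D_j}(({\cal T}^{{\cal H}})^{\ell}_{\alpha\beta})^{2}-\Big(\sum_{\alpha\in D_j}({\cal T}^{{\cal H}})^{\ell}_{\alpha\alpha}\Big)^{2}\Big].
\]
Subtracting the $L_j$ identities from the full one reduces (\ref{eq-GCFI}) to a purely algebraic inequality. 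Concretely, I need
\[
\sum_{\ell=1}^{s}\Big\{\Big(\sum_{\alpha}({\cal T}^{{\cal H}})^{\ell}_{\alpha\alpha}\Big)^{2}-\sum_{\alpha\ne\beta}(({\cal T}^{{\cal H}})^{\ell}_{\alpha\beta})^{2}-\sum_{\alpha}(({\cal T}^{{\cal H}})^{\ell}_{\alpha\alpha})^{2}\Big\}
-\sum_{j}\sum_{\ell}\{\text{same on }D_j\}\le 2c(r_1,\ldots,r_k)\|H\|^{2},
\]
or the equivalent statement for $\delta$-type differences.

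\emph{Step 2: basis choice and the algebraic lemma.} Choose $h_1$ parallel to $H$ when $H\neq 0$, and choose $V_i$ so that $L_j=\mathrm{span}\{V_\alpha:\alpha\in D_j\}$. For $\ell\ge 2$ the trace $\sum_\alpha({\cal T}^{{\cal H}})^{\ell}_{\alpha\alpha}$ vanishes, so the $\ell\ge2$ terms are $\le 0$ after discarding off-diagonal squares. That leaves only $-\big(\sum_{\alpha\in D_j}({\cal T}^{{\cal H}})^\ell_{\alpha\alpha}\big)^2$ to control, and this term has the right sign.

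For $\ell=1$ set $a_\alpha=({\cal T}^{{\cal H}})^{1}_{\alpha\alpha}$ and group the indices into the $k$ block sums $b_j=\sum_{\alpha\in D_j}a_\alpha$ together with the remaining $r-\sum_j r_j$ singletons. What remains is
\[
\sum_{j}\sum_{\alpha<\beta\in D_j}a_\alpha a_\beta\ \ge\ \cdots,
\]
which is Chen's generalized Lemma~3.2 of \cite{Chen_00_JJM}. That lemma is proved by the Cauchy--Schwarz inequality on the $N=r+k-\sum r_j$ grouped quantities. It yields
\[
\sum_{\alpha<\beta}a_\alpha a_\beta-\sum_j\sum_{\alpha<\beta\in D_j}a_\alpha a_\beta\le \frac{N-1}{2N}\Big(\sum_\alpha a_\alpha\Big)^{2}=c(r_1,\ldots,r_k)\|H\|^2 ,
\]
because $\big(\sum_\alpha a_\alpha\big)^{2}=r^2\|H\|^2$. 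If only Lemma \ref{Lemma 2} may be quoted, I would reprove this extension directly by applying Cauchy--Schwarz to $(b_1,\ldots,b_k,a_{\sum r_j+1},\ldots,a_r)$.

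\emph{Step 3: equality.} Equality forces three things:
\begin{itemize}
\item all discarded off-diagonal entries vanish outside the blocks $D_j^2$ for $\ell=1$, and vanish entirely for $\ell\ge2$;
\item the block traces for $\ell\ge 2$ are zero;
\item the Cauchy--Schwarz step is an equality, i.e. $b_1=\cdots=b_k=a_{\sum r_j+1}=\cdots=a_r$.
\end{itemize}
These are exactly the listed conditions.

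The main obstacle is the bookkeeping in Step 1: keeping the sign conventions of (\ref{eq-P-(10)}) consistent, separating correctly the inside-block and outside-block contributions of $\|{\cal T}^{{\cal H}}\|^2$, and checking that the constant produced equals $c(r_1,\ldots,r_k)$ with the normalization $\|H\|^2$ of (\ref{eq-P-(9)}). I would first carry out the case $k=1$, $r_1=2$ as a check against the classical Chen inequality.
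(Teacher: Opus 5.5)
\textbf{There is a sign error in Step 1, and it reverses the inequality you end up proving.} The direction of (\ref{eq-GCFI}) depends on exactly this sign. Put $F_1=F_4=V_i$ and $F_2=F_3=V_j$ into (\ref{eq-P-(10)}) as written. The extra terms are $g_1(\mathcal{T}_{V_i}V_i,\mathcal{T}_{V_j}V_j)-\Vert\mathcal{T}_{V_i}V_j\Vert^2$. Hence
$2\tau^{M_1}_{\mathcal{V}}(p)=2\tau^{\ker\pi_{\ast}}_{\mathcal{V}}(p)+r^2\Vert H\Vert^2-\Vert\mathcal{T}^{\mathcal{H}}\Vert^2$, and on each $L_j$ the bracket is (block trace)$^2$ minus the block squares. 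This is the opposite of what you wrote, despite your remark that the sign is ``fixed by the convention in (\ref{eq-P-(10)})''. Your identities are the usual Gauss equation of the fibres when $R(X,Y,Y,X)$ is the sectional curvature.

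Let $E$ be the left-hand side of your ``I need'' display. With your Step 1 identities,
\[
2\Big[\tau^{\ker\pi_{\ast}}_{\mathcal{V}}(p)-\sum_{j=1}^{k}\tau^{\ker\pi_{\ast}}_{\mathcal{V}}(L_j)\Big]-2\Big[\tau^{M_1}_{\mathcal{V}}(p)-\sum_{j=1}^{k}\tau^{M_1}_{\mathcal{V}}(L_j)\Big]=E .
\]
So (\ref{eq-GCFI}) would be equivalent to $E\ge -2c(r_1,\ldots,r_k)\Vert H\Vert^2$, not to $E\le 2c(r_1,\ldots,r_k)\Vert H\Vert^2$. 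The inequality $E\ge -2c\Vert H\Vert^2$ is false in general. Take $H=0$ and a single nonzero pair $(\mathcal{T}^{\mathcal{H}})^1_{\lambda\mu}=(\mathcal{T}^{\mathcal{H}})^1_{\mu\lambda}=t$ with $(\lambda,\mu)\notin D^2$; then $E=-2t^2<0$.

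What your Step 2 actually proves is $E\le 2c\Vert H\Vert^2$. Combined with your Step 1, this gives the reverse inequality
\[
\tau^{\ker\pi_{\ast}}_{\mathcal{V}}(p)-\sum_{j=1}^{k}\tau^{\ker\pi_{\ast}}_{\mathcal{V}}(L_j)\le\tau^{M_1}_{\mathcal{V}}(p)-\sum_{j=1}^{k}\tau^{M_1}_{\mathcal{V}}(L_j)+c(r_1,\ldots,r_k)\Vert H\Vert^2 .
\]
As written, the proposal therefore proves the opposite of the statement.

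\textbf{The fix is to use (\ref{eq-P-(10)}) literally, as the paper does.} The paper's $\eta=2\tau^{M_1}_{\mathcal{V}}(p)-2\tau^{\ker\pi_{\ast}}_{\mathcal{V}}(p)-2c\Vert H\Vert^2$ satisfies $r^2\Vert H\Vert^2=\gamma(\Vert\mathcal{T}^{\mathcal{H}}\Vert^2+\eta)$ only with this sign. With it, twice (left side minus right side) of (\ref{eq-GCFI}) equals $2c\Vert H\Vert^2-E$, and your Step 2 completes the argument.

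After this correction, your route differs from the paper's only in the algebraic step. The paper applies Lemma~\ref{Lemma 2} to the $\gamma+1$ numbers $b_1=a_1$, $b_2=a_2+\cdots+a_{r_1}$, the other block sums and the singletons. It then reassembles $2b_1b_2$ plus the subtracted cross terms into $2\sum_j\sum_{\alpha_j<\beta_j}a_{\alpha_j}a_{\beta_j}$. Your Cauchy--Schwarz on the $\gamma$ group sums is shorter and gives the equality condition (all group sums equal) directly.

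\textbf{One caution for Step 3.} For $\ell\ge 2$, the within-block entries cancel between the full identity and the block identities. Your computation therefore forces $(\mathcal{T}^{\mathcal{H}})^\ell_{\lambda\mu}=0$ only for $(\lambda,\mu)\notin D^2$, together with zero block traces, and not that these entries vanish entirely. The third listed condition is stronger than what equality yields, so you should not claim your conditions are ``exactly'' the listed ones.
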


\begin{proof}
Let $\{V_{1},\ldots ,V_{r}\}$ be an orthonormal basis
of $\left( \ker \pi _{\ast }\right) _{p}$, and let $\{h_{1},\ldots ,h_{s}\}$
be an orthonormal basis of the normal space $(\ker \pi _{\ast })_{p}^{\perp }
$, chosen so that the mean curvature vector $H$ lies along the direction of $%
V_{1}$. For convenience, we set%
\begin{equation}
a_{i}=\left( {\cal T}^{{\cal H}}\right) _{ii}^{1}=g\left( \left( {\cal T}^{%
{\cal H}}\right) \left( V_{i},V_{i}\right) ,h_{1}\right) ,\quad i=1,\ldots
,r,  \label{eq-GCFI-(1.01)}
\end{equation}%
Define index sets 
\begin{eqnarray*}
D_{1} &=&\left\{ 1,\ldots ,r_{1}\right\} , \\
D_{2} &=&\left\{ r_{1}+1,\ldots ,r_{1}+r_{2}\right\}  \\
&&\vdots  \\
D_{k} &=&\left\{ r_{1}+\cdots +r_{k-1}+1,\ldots ,r_{1}+\cdots
+r_{k-1}+r_{k}\right\} 
\end{eqnarray*}%
Let $L_{1},\ldots ,L_{k}$ be mutually orthogonal subspaces of ${\cal V}_{p}$%
, with $\dim L_{j}=r_{j}$, defined by%
\[
L_{j}={\rm span}\{e_{r_{1}+\cdots +r_{j-1}+1},...,e_{r_{1}+\cdots
+r_{j-1}+r_{j}}\},\quad j=1,...,k.
\]%
From (\ref{eq-P-(9)}), (\ref{eq-P-(10)}) and (\ref{eq-P-(14)}), we obtain 
\begin{equation}
r^{2}\left\Vert H\right\Vert ^{2}=(r+k-\sum_{j=1}^{k}r_{j})\{2\tau _{{\cal V}%
}^{M_{1}}(p)-2\tau _{{\cal V}}^{\ker \pi _{\ast
}}(p)-2c(r_{1},...,r_{k})\left\Vert H\right\Vert ^{2}+\left\Vert \left( 
{\cal T}^{{\cal H}}\right) \right\Vert ^{2}\},  \label{eq-rh2}
\end{equation}%
equation (\ref{eq-rh2}) can be written as 
\begin{equation}
r^{2}\left\Vert H\right\Vert ^{2}=(\left\Vert \left( {\cal T}^{{\cal H}%
}\right) \right\Vert ^{2}+\eta )\gamma ,  \label{GCFI-(1.1)}
\end{equation}%
where%
\begin{eqnarray}
\eta  &=&2\tau _{{\cal V}}^{M_{1}}(p)-2\tau _{{\cal V}}^{\ker \pi _{\ast
}}(p)-2c(r_{1},...,r_{k})\left\Vert H\right\Vert ^{2},  \label{eq-eta} \\
\gamma  &=&r+k-\sum_{j=1}^{k}r_{j},  \label{eq-gamma} \\
2c &=&\frac{r^{2}(r+k-1-\sum_{j=1}^{k}r_{j})}{(r+k-\sum_{j=1}^{k}r_{j})}.
\label{eq-GCFI-(1.1b)}
\end{eqnarray}%
Since mean curvature vector $H$ lies in the direction of $h_{1}$, then from (%
\ref{GCFI-(1.1)}), we obtain%
\begin{equation}
\left( \sum_{i=1}^{r}\left( {\cal T}^{{\cal H}}\right) _{ii}^{1}\right)
^{2}=\gamma \left[ \sum_{i=1}^{r}(\left( {\cal T}^{{\cal H}}\right)
_{ii}^{1})^{2}+\sum_{i\neq j=1}^{r}(\left( {\cal T}^{{\cal H}}\right)
_{ij}^{1})^{2}+\sum_{\ell =2}^{s}\sum_{i,j=1}^{r}(\left( {\cal T}^{{\cal H}%
}\right) _{ij}^{\ell })^{2}+\eta \right] .  \label{eq-GCFI-(1.2)}
\end{equation}%
From (\ref{eq-GCFI-(1.2)}) and (\ref{eq-GCFI-(1.01)}), we obtain%
\begin{equation}
\left( \sum_{i=1}^{r}a_{i}\right) ^{2}=\gamma \left( \sum_{i=1}^{r}\left(
a_{i}\right) ^{2}+\sum_{i\neq j=1}^{r}\left( \left( {\cal T}^{{\cal H}%
}\right) _{ij}^{1}\right) ^{2}+\sum_{\ell =1}^{s}\sum_{i=1}^{r}\left( \left( 
{\cal T}^{{\cal H}}\right) _{ii}^{\ell }\right) ^{2}+\eta \right) ,
\label{eq-ai}
\end{equation}%
equation (\ref{eq-ai}) can be written as%
\begin{eqnarray}
\left( \sum_{i=1}^{r}a_{i}\right) ^{2} &=&\gamma \left\{
\sum_{i=1}^{r}\left( a_{i}\right) ^{2}+\sum_{i\neq j=1}^{r}\left( \left( 
{\cal T}^{{\cal H}}\right) _{ij}^{1}\right) ^{2}+\sum_{\ell
=1}^{s}\sum_{i=1}^{r}\left( \left( {\cal T}^{{\cal H}}\right) _{ii}^{\ell
}\right) ^{2}+\eta \right.   \nonumber \\
&&\left. +\sum_{2\leq \alpha _{1}\neq \beta _{1}\leq r_{1}}a_{\alpha
_{1}}a_{\beta _{1}}-\sum_{2\leq \alpha _{1}\neq \beta _{1}\leq
r_{1}}a_{\alpha _{1}}a_{\beta _{1}}+\sum_{\alpha _{2}\neq \beta
_{2}}a_{\alpha _{2}}a_{\beta _{2}}\right.   \nonumber \\
&&\left. -\sum_{\alpha _{2}\neq \beta _{2}}a_{\alpha _{2}}a_{\beta
_{2}}+\cdots +\sum_{\alpha _{k}\neq \beta _{k}}a_{\alpha _{k}}a_{\beta
_{k}}-\sum_{\alpha _{k}\neq \beta _{k}}a_{\alpha _{k}}a_{\beta _{k}}\right\}
,  \label{eq-(1.2aa)}
\end{eqnarray}%
for $\alpha _{2},\beta _{2}\in D_{2},\ldots ,\alpha _{k},\beta _{k}\in D_{k}$%
. Denoting

\begin{eqnarray*}
b_{1} &=&a_{1},\ b_{2}=a_{2}+\cdots +a_{r_{1}},\ b_{3}=a_{r_{1}+1}+\cdots
+a_{r_{1}+r_{2}},\cdots , \\
b_{k+1} &=&a_{r_{k-1}+1}+\cdots +a_{r_{1}+r_{2}+\cdots +r_{k}},\quad
b_{k+2}=a_{\left( \sum_{j=1}^{k}r_{j}\right) +1},\cdots , \\
b_{\gamma +1} &=&b_{r+k-\sum_{j=1}^{k}r_{j}+1}=a_{\left(
\sum_{j=1}^{k}r_{j}\right) +r-\sum_{j=1}^{k}r_{j}}=a_{r},
\end{eqnarray*}%
equation (\ref{eq-(1.2aa)}) becomes%
\begin{eqnarray*}
\left( \sum_{i=1}^{\gamma +1}b_{i}\right) ^{2} &=&\gamma \left\{
\sum_{i=1}^{r}\left( b_{i}\right) ^{2}+\sum_{i\neq j=1}^{r}\left( \left( 
{\cal T}^{{\cal H}}\right) _{ij}^{1}\right) ^{2}+\sum_{\ell
=2}^{s}\sum_{i=1}^{r}\left( \left( {\cal T}^{{\cal H}}\right) _{ii}^{\ell
}\right) ^{2}+\eta \right.  \\
&&\left. -\sum_{2\leq \alpha _{1}\neq \beta _{1}\leq r_{1}}a_{\alpha
_{1}}a_{\beta _{1}}-\sum_{\alpha _{2}\neq \beta _{2}}a_{\alpha _{2}}a_{\beta
_{2}}-\cdots -\sum_{\alpha _{k}\neq \beta _{k}}a_{\alpha _{k}}a_{\beta
_{k}}\right\} ,
\end{eqnarray*}%
for $\alpha _{2},\beta _{2}\in D_{2},\ldots ,\alpha _{k},\beta _{k}\in D_{k}$%
. Applying Lemma~\ref{Lemma 2} to the above relation, we obtain%
\begin{eqnarray}
2b_{1}b_{2} &\geq &\left\{ \sum_{i\neq j=1}^{r}\left( \left( {\cal T}^{{\cal %
H}}\right) _{ij}^{1}\right) ^{2}+\sum_{\ell =2}^{s}\sum_{i=1}^{r}\left(
\left( {\cal T}^{{\cal H}}\right) _{ii}^{\ell }\right) ^{2}+\eta \right.  
\nonumber \\
&&\left. -\sum_{2\leq \alpha _{1}\neq \beta _{1}\leq r_{1}}a_{\alpha
_{1}}a_{\beta _{1}}-\sum_{\alpha _{2}\neq \beta _{2}}a_{\alpha _{2}}a_{\beta
_{2}}-\cdots -\sum_{\alpha _{k}\neq \beta _{k}}a_{\alpha _{k}}a_{\beta
_{k}}\right\} ,  \label{eq-GCFI-(1.2a)}
\end{eqnarray}%
which implies 
\begin{eqnarray}
&&\sum_{\alpha _{1}<\beta _{1}}a_{\alpha _{1}}a_{\beta _{1}}+\sum_{\alpha
_{2}<\beta _{2}}a_{\alpha _{2}}a_{\beta _{2}}+\cdots +\sum_{\alpha
_{k}<\beta _{k}}a_{\alpha _{k}}a_{\beta _{k}}  \nonumber \\
&&\quad \quad \quad \quad \quad \left. \geq \frac{\eta }{2}+\frac{1}{2}%
\sum_{i\neq j=1}^{r}\left( \left( {\cal T}^{{\cal H}}\right)
_{ij}^{1}\right) ^{2}+\frac{1}{2}\sum_{\ell =2}^{s}\sum_{i=1}^{r}\left(
\left( {\cal T}^{{\cal H}}\right) _{ii}^{\ell }\right) ^{2}\right. 
\label{eq-GCFI-(1.3)}
\end{eqnarray}%
for $\alpha _{j},\beta _{j}\in D_{j}$, $j=1,\ldots ,k$. From (\ref{eq-P-(10)}%
), for $\alpha _{j},\beta _{j}\in D_{j}$, $j=1,\ldots ,k$, we have%
\[
\sum_{\alpha _{j}<\beta _{j}}K_{\alpha _{j}\beta _{j}}^{M_{1}}=\sum_{\alpha
_{j}<\beta _{j}}K_{\alpha _{j}\beta _{j}}^{\ker \pi _{\ast }}+\sum_{\ell
=1}^{s}\sum_{\alpha _{j}<\beta _{j}}\left( \left( {\cal T}^{{\cal H}}\right)
_{\alpha _{j}\alpha _{j}}^{\ell }\left( {\cal T}^{{\cal H}}\right) _{\beta
_{j}\beta _{j}}^{\ell }-\left( \left( {\cal T}^{{\cal H}}\right) _{\alpha
_{j}\beta _{j}}^{\ell }\right) ^{2}\right) ,
\]%
\begin{eqnarray}
\sum_{j=1}^{k}\tau _{{\cal V}}^{M_{1}}\left( L_{j}\right) 
&=&\sum_{j=1}^{k}\tau _{{\cal V}}^{\ker \pi _{\ast }}\left( L_{j}\right)
+\sum_{j=1}^{k}\sum_{\alpha _{j}<\beta _{j}}\left( \left( {\cal T}^{{\cal H}%
}\right) _{\alpha _{j}\alpha _{j}}^{1}\left( {\cal T}^{{\cal H}}\right)
_{\beta _{j}\beta _{j}}^{1}-\left( \left( {\cal T}^{{\cal H}}\right)
_{\alpha _{j}\beta _{j}}^{1}\right) ^{2}\right)   \nonumber \\
&&+\sum_{j=1}^{k}\sum_{\ell =2}^{s}\sum_{\alpha _{j}<\beta _{j}}\left(
\left( {\cal T}^{{\cal H}}\right) _{\alpha _{j}\alpha _{j}}^{\ell }\left( 
{\cal T}^{{\cal H}}\right) _{\beta _{j}\beta _{j}}^{\ell }-\left( \left( 
{\cal T}^{{\cal H}}\right) _{\alpha _{j}\beta _{j}}^{\ell }\right)
^{2}\right) .  \label{eq-GCFI-(1.4)}
\end{eqnarray}%
Define%
\begin{eqnarray*}
D &=&D_{1}\cup D_{2}\cup \cdots \cup D_{k} \\
D^{2} &=&D_{1}\times D_{1}\cup D_{2}\times D_{2}\cup \cdots \cup D_{k}\times
D_{k}.
\end{eqnarray*}%
Substituting the value from (\ref{eq-GCFI-(1.3)}) into (\ref{eq-GCFI-(1.4)}%
), we get%
\begin{eqnarray}
\sum_{j=1}^{k}\tau _{{\cal V}}^{M_{1}}\left( L_{j}\right)  &\geq
&\sum_{j=1}^{k}\tau _{{\cal V}}^{\ker \pi _{\ast }}\left( L_{j}\right) +%
\frac{\eta }{2}+\frac{1}{2}\sum_{i\neq j=1}^{r}\left( \left( {\cal T}^{{\cal %
H}}\right) _{ij}^{1}\right) ^{2}+\frac{1}{2}\sum_{\ell
=2}^{s}\sum_{i=1}^{r}\left( \left( {\cal T}^{{\cal H}}\right) _{ii}^{\ell
}\right) ^{2}  \nonumber \\
&&-\sum_{j=1}^{k}\sum_{\alpha _{j}<\beta _{j}}\left( \left( {\cal T}^{{\cal H%
}}\right) _{\alpha _{j}\beta _{j}}^{1}\right) ^{2}+\sum_{j=1}^{k}\sum_{\ell
=2}^{s}\sum_{\alpha _{j}<\beta _{j}}\left( {\cal T}^{{\cal H}}\right)
_{\alpha _{j}\alpha _{j}}^{\ell }\left( {\cal T}^{{\cal H}}\right) _{\beta
_{j}\beta _{j}}^{\ell }  \nonumber \\
&&-\sum_{j=1}^{k}\sum_{\ell =2}^{s}\sum_{\alpha _{j}<\beta _{j}}\left(
\left( {\cal T}^{{\cal H}}\right) _{\alpha _{j}\beta _{j}}^{\ell }\right)
^{2}.  \label{eq-term-3-5}
\end{eqnarray}%
Form (\ref{eq-term-3-5}), we get 
\begin{eqnarray}
\sum_{j=1}^{k}\tau _{{\cal V}}^{M_{1}}\left( L_{j}\right)  &\geq
&\sum_{j=1}^{k}\tau _{{\cal V}}^{\ker \pi _{\ast }}\left( L_{j}\right) +%
\frac{\eta }{2}+\frac{1}{2}\sum_{\left( \lambda ,\mu \neq \lambda \right)
\notin D^{2}}\left( \left( {\cal T}^{{\cal H}}\right) _{\lambda \mu
}^{1}\right) ^{2}+\frac{1}{2}\sum_{\ell =2}^{s}\sum_{i=1}^{r}\left( \left( 
{\cal T}^{{\cal H}}\right) _{ii}^{\ell }\right) ^{2}  \nonumber \\
&&+\sum_{j=1}^{k}\sum_{\ell =2}^{s}\sum_{\alpha _{j}<\beta _{j}}\left( {\cal %
T}^{{\cal H}}\right) _{\alpha _{j}\alpha _{j}}^{\ell }\left( {\cal T}^{{\cal %
H}}\right) _{\beta _{j}\beta _{j}}^{\ell }-\sum_{j=1}^{k}\sum_{\ell
=2}^{s}\sum_{\alpha _{j}<\beta _{j}}\left( \left( {\cal T}^{{\cal H}}\right)
_{\alpha _{j}\beta _{j}}^{\ell }\right) ^{2}.  \label{eq-binomial}
\end{eqnarray}%
By using the binomial expansion in (\ref{eq-binomial}), we obtain 
\begin{eqnarray}
\sum_{j=1}^{k}\tau _{{\cal V}}^{M_{1}}\left( L_{j}\right)  &\geq
&\sum_{j=1}^{k}\tau _{{\cal V}}^{\ker \pi _{\ast }}\left( L_{j}\right) +%
\frac{\eta }{2}+\frac{1}{2}\sum_{\left( \lambda ,\mu \neq \lambda \right)
\notin D^{2}}\left( \left( {\cal T}^{{\cal H}}\right) _{\lambda \mu
}^{1}\right) ^{2}+\frac{1}{2}\sum_{\ell =2}^{s}\sum_{\left( \lambda ,\mu
\right) \notin D^{2}}\left( \left( {\cal T}^{{\cal H}}\right) _{\lambda \mu
}^{\ell }\right) ^{2}  \nonumber \\
&&+\frac{1}{2}\sum_{j=1}^{k}\sum_{\ell =2}^{s}\left( \sum_{\alpha _{j}\in
D_{j}}\left( {\cal T}^{{\cal H}}\right) _{\alpha _{j}\alpha _{j}}^{\ell
}\right) ^{2}.  \label{eq-afterbin}
\end{eqnarray}%
Consequently, From (\ref{eq-afterbin}), we get 
\begin{equation}
\sum_{j=1}^{k}\tau _{{\cal V}}^{M_{1}}\left( L_{j}\right) \geq
\sum_{j=1}^{k}\tau _{{\cal V}}^{\ker \pi _{\ast }}\left( L_{j}\right) +\frac{%
\eta }{2},  \label{eq-GCFI-(1.5)}
\end{equation}%
From (\ref{eq-eta}) and (\ref{eq-GCFI-(1.5)}), we obtain 
\[
\tau _{{\cal V}}^{\ker \pi _{\ast }}(p)-\sum_{j=1}^{k}\tau _{{\cal V}}^{\ker
\pi _{\ast }}\left( L_{j}\right) \geq \tau _{{\cal V}}^{M_{1}}(p)-%
\sum_{j=1}^{k}\tau _{{\cal V}}^{M_{1}}\left( L_{j}\right) -c\left(
r_{1},\ldots ,r_{k}\right) \left\Vert H\right\Vert ^{2}.
\]%
The equality case in (\ref{eq-GCFI}) holds if and only if equalities in (\ref%
{eq-GCFI-(1.2a)}) and (\ref{eq-GCFI-(1.5)}) are satisfied. According to
Lemma \ref{Lemma 2}, equality in (\ref{eq-GCFI-(1.2a)}) holds if and only if%
\[
b_{1}+b_{2}=b_{3}=\cdots =b_{k+1}=\cdots =b_{\gamma +1}=a_{r},
\]%
that is,%
\begin{equation}
\sum_{\alpha _{1}\in D_{1}}a_{\alpha _{1}}=\sum_{\alpha _{2}\in
D_{2}}a_{\alpha _{2}}=\cdots =\sum_{\alpha _{r}\in D_{r}}a_{\alpha
_{r}}=a_{_{\left( \sum_{j=1}^{k}r_{j}\right) +1}}=\cdots =a_{r},
\label{eq-GCFI-(1.6)}
\end{equation}%
equality in (\ref{eq-GCFI-(1.5)}) holds if and only if%
\begin{eqnarray}
\left( {\cal T}^{{\cal H}}\right) _{\lambda \mu }^{1} &=&0,\quad \left(
\lambda ,\mu \neq \lambda \right) \notin D^{2},\quad \lambda ,\mu \in
\left\{ 1,\ldots ,r\right\}   \nonumber \\
\left( {\cal T}^{{\cal H}}\right) _{\lambda \mu }^{\ell } &=&0,\quad \ell
=2,\ldots ,s,\quad \lambda ,\mu \in \left\{ 1,\ldots ,r\right\}   \nonumber
\\
\sum_{\alpha _{j}\in D_{j}}\left( {\cal T}^{{\cal H}}\right) _{\alpha
_{j}\alpha _{j}}^{\ell } &=&0,\quad \ell =2,\ldots ,s,\quad j=1,\ldots k.
\label{eq-GCFI-(1.7)}
\end{eqnarray}%
Which complete the proof.
\end{proof} 
\begin{definition}
Let $\pi :(M_{1},g_{1})\rightarrow (M_{2},g_{2})$ be a Riemannian submersion
between Riemannian manifolds with $\dim M_{1}=n$ and $\dim M_{2}=m$. For
each $\left( r_{1}.\ldots ,r_{k}\right) \in S\left( r\right) $, we define $%
\delta \left( r_{1}.\ldots ,r_{k}\right) \left( p\right) $ and $\hat{\delta}%
\left( r_{1}.\ldots ,r_{k}\right) \left( p\right) $ by%
\begin{eqnarray}
\delta _{{\cal V}}^{\ker \pi _{\ast }}\left( r_{1}.\ldots ,r_{k}\right)
\left( p\right)  &=&\tau _{{\cal V}}^{\ker \pi _{\ast }}(p)-\inf \left\{
\sum_{j=1}^{k}\tau _{{\cal V}}^{\ker \pi _{\ast }}\left( L_{j}\right)
\right\}   \nonumber \\
\delta _{{\cal V}}^{M_{1}}\left( r_{1}.\ldots ,r_{k}\right) \left( p\right) 
&=&\tau _{{\cal V}}^{M_{1}}(p)-\inf \left\{ \sum_{j=1}^{k}\tau _{{\cal V}%
}^{M_{1}}\left( L_{j}\right) \right\}   \label{eq-P-D-(14)}
\end{eqnarray}%
\begin{eqnarray}
\hat{\delta}_{{\cal V}}^{\ker \pi _{\ast }}\left( r_{1}.\ldots ,r_{k}\right)
\left( p\right)  &=&\tau _{{\cal V}}^{\ker \pi _{\ast }}(p)-\sup \left\{
\sum_{j=1}^{k}\tau _{{\cal V}}^{\ker \pi _{\ast }}\left( L_{j}\right)
\right\}   \nonumber \\
\hat{\delta}_{{\cal V}}^{M_{1}}\left( r_{1}.\ldots ,r_{k}\right) \left(
p\right)  &=&\tau _{{\cal V}}^{M_{1}}(p)-\sup \left\{ \sum_{j=1}^{k}\tau _{%
{\cal V}}^{M_{1}}\left( L_{j}\right) \right\}   \label{eq-P-D-(14.1)}
\end{eqnarray}%
where $L_{1},\ldots ,L_{k}$ run over all rmutually orthogonal subspaces of $%
{\cal V}_{p}$.
\end{definition}

In the following theorem, using Theorem \ref{Theorem 2}, we provide an
inequality for $\hat{\delta}_{{\cal V}}^{\ker \pi _{\ast }}\left(
r_{1}.\ldots ,r_{k}\right) \left( p\right) $ and $\hat{\delta}_{{\cal V}%
}^{M_{1}}\left( r_{1},\ldots ,r_{k}\right) \left( p\right) $.

\begin{theorem}
Let $\pi :\left( M_{1},g_{1}\right) \rightarrow \left( M_{2},g_{2}\right) $
be a Riemannian submersion. Let $r_{1},\ldots ,r_{k}$ be intigers $\geq 2$
satisfying $r_{1}<r$, $r_{1}+\cdots +r_{k}\leq r$. For $p\in M_{1}$, let $%
L_{j}$ be an $r_{j}$-plane section of ${\cal V}_{p}$, $j=1,\ldots ,k$. Then
we have%
\begin{equation}
\hat{\delta}_{{\cal V}}^{\ker \pi _{\ast }}\left( r_{1}.\ldots ,r_{k}\right)
\left( p\right) \geq \hat{\delta}_{{\cal V}}^{M_{1}}\left( r_{1}.\ldots
,r_{k}\right) \left( p\right) -c\left( r_{1},\ldots ,r_{k}\right) \left\Vert
H\right\Vert ^{2},  \label{eq-GCDI}
\end{equation}%
for any $k$-tuple $\left( r_{1},\ldots ,r_{k}\right) \in S\left( r\right) $,
and equality holds at $p\in M_{1}$ if and only if there exist an orthonormal
basis $\{V_{1},\ldots ,V_{r}\}$ of $\left( \ker \pi _{\ast }\right) _{p}$,
and $\{h_{1},\ldots ,h_{s}\}$ be an orthonormal basis of the normal space $%
(\ker \pi _{\ast })_{p}^{\perp }$ such that

\begin{enumerate}
\item equality in {\rm (\ref{eq-GCDI})} holds at a point $p\in M_{1}$ if and
only if it follows equality cases of Theorem {\rm \ref{Theorem 2}} and for
any $k$ mutually orthogonal subspaces $L_{1},\ldots ,L_{k}$ of ${\cal V}_{p}$
satisfying 
\begin{equation}
\hat{\delta}_{{\cal V}}^{\ker \pi _{\ast }}\left( r_{1}.\ldots ,r_{k}\right)
\left( p\right) =\tau _{{\cal V}}^{\ker \pi _{\ast }}(p)-\sum_{j=1}^{k}\tau
_{{\cal V}}^{\ker \pi _{\ast }}\left( L_{j}\right)   \label{eq-GCDI-(01)}
\end{equation}%
we have%
\begin{equation}
\hat{\delta}_{{\cal V}}^{M_{1}}\left( r_{1}.\ldots ,r_{k}\right) \left(
p\right) =\tau _{{\cal V}}^{M_{1}}(p)-\sum_{j=1}^{k}\tau _{{\cal V}%
}^{M_{1}}\left( L_{j}\right)   \label{eq-GCDI-(02)}
\end{equation}
\end{enumerate}
\end{theorem}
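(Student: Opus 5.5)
The plan is to derive the inequality \eqref{eq-GCDI} directly from Theorem~\ref{Theorem 2} by taking a supremum over admissible families of subspaces. The key observation is that the right-hand side of \eqref{eq-GCFI} contains $\|H\|^2$ and the full scalar curvatures $\tau_{\mathcal V}^{\ker\pi_*}(p)$ and $\tau_{\mathcal V}^{M_1}(p)$. None of these depend on the choice of $L_1,\dots,L_k$; only the sums $\sum_j\tau(L_j)$ vary. Also, the proof of Theorem~\ref{Theorem 2} places no restriction on the mutually orthogonal subspaces $L_j\subset\mathcal V_p$ beyond $\dim L_j=r_j$, since any such family can be completed to an adapted orthonormal basis of $\mathcal V_p$. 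So \eqref{eq-GCFI} holds for every admissible family.

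First, rewrite \eqref{eq-GCFI} as
\[
\sum_{j=1}^{k}\tau_{\mathcal V}^{M_1}(L_j)\ \ge\ \sum_{j=1}^{k}\tau_{\mathcal V}^{\ker\pi_*}(L_j)+\Big(\tau_{\mathcal V}^{M_1}(p)-\tau_{\mathcal V}^{\ker\pi_*}(p)-c(r_1,\dots,r_k)\|H\|^2\Big),
\]
valid for all admissible $(L_1,\dots,L_k)$. The bracketed term is a constant at $p$. Next, take the supremum over all families on both sides. The left side is at most $\sup\sum_j\tau_{\mathcal V}^{M_1}(L_j)$, so
\[
\sup\sum_j\tau_{\mathcal V}^{M_1}(L_j)\ \ge\ \sup\sum_j\tau_{\mathcal V}^{\ker\pi_*}(L_j)+\tau_{\mathcal V}^{M_1}(p)-\tau_{\mathcal V}^{\ker\pi_*}(p)-c\|H\|^2 .
\]
Using the definitions \eqref{eq-P-D-(14.1)} and rearranging gives exactly \eqref{eq-GCDI}. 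Because the Grassmannian of $k$-tuples of mutually orthogonal subspaces of $\mathcal V_p$ is compact and the $\tau(L_j)$ depend continuously on $L_j$, both suprema are attained. This remark matters for the equality discussion.

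For the equality case, choose a family $(L_1,\dots,L_k)$ attaining $\sup\sum_j\tau_{\mathcal V}^{\ker\pi_*}(L_j)$, so that \eqref{eq-GCDI-(01)} holds. For this family we have the chain
\[
\hat\delta_{\mathcal V}^{\ker\pi_*}=\tau_{\mathcal V}^{\ker\pi_*}(p)-\sum_j\tau_{\mathcal V}^{\ker\pi_*}(L_j)\ \ge\ \tau_{\mathcal V}^{M_1}(p)-\sum_j\tau_{\mathcal V}^{M_1}(L_j)-c\|H\|^2\ \ge\ \hat\delta_{\mathcal V}^{M_1}-c\|H\|^2 .
\]
Equality in \eqref{eq-GCDI} therefore forces equality in both steps. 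The first step is Theorem~\ref{Theorem 2}, so its equality conditions on $\mathcal T^{\mathcal H}$ must hold with respect to a basis adapted to this family. The second step is the definition of the supremum, so this same family must also realize \eqref{eq-GCDI-(02)}. Conversely, if these conditions hold, both steps become equalities and \eqref{eq-GCDI} is an equality.

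The main obstacle is making the equality characterization precise. The statement says ``for any'' family realizing \eqref{eq-GCDI-(01)}, but the argument above naturally gives a condition for \emph{some} maximizing family. I would first check that equality forces every maximizing family to satisfy both conditions. This follows because the chain above applies to each maximizer separately, so each one must give equality in both steps. I would also verify that the basis in Theorem~\ref{Theorem 2}'s equality conditions can be adapted to the chosen $L_j$. This holds because the Theorem~\ref{Theorem 2} proof builds $D_1,\dots,D_k$ from an arbitrary basis completing the $L_j$, with $h_1$ chosen along $H$.
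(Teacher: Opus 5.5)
Your argument is correct and is essentially the paper's. The paper takes the infimum over admissible families on both sides of (\ref{eq-GCFI}) and pulls out the $L$-independent quantities $\tau_{\mathcal V}^{\ker\pi_*}(p)$, $\tau_{\mathcal V}^{M_1}(p)$ and $c(r_1,\ldots,r_k)\|H\|^2$, which is equivalent to your supremum step; for the equality case the paper only restates the characterization, so your chain through a maximizing family (attained by compactness) actually justifies it, including why ``for any'' and ``for some'' maximizer give the same condition.
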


\begin{proof}
From (\ref{eq-GCFI}), we have%
\begin{equation}
\tau _{{\cal V}}^{\ker \pi _{\ast }}(p)-\sum_{j=1}^{k}\tau _{{\cal V}}^{\ker
\pi _{\ast }}\left( L_{j}\right) \geq \inf \left\{ \tau _{{\cal V}%
}^{M_{1}}(p)-\sum_{j=1}^{k}\tau _{{\cal V}}^{M_{1}}\left( L_{j}\right)
-c\left( r_{1},\ldots ,r_{k}\right) \left\Vert H\right\Vert ^{2}\right\} 
\label{eq-GCDI-(1)}
\end{equation}%
that is $\tau _{{\cal V}}^{M_{1}}(p)-\sum_{j=1}^{k}\tau _{{\cal V}%
}^{M_{1}}\left( L_{j}\right) -c\left( r_{1},\ldots ,r_{k}\right) \left\Vert
H\right\Vert ^{2}$ is an lower bound for $\tau _{{\cal V}}^{\ker \pi _{\ast
}}(p)-\sum_{j=1}^{k}\tau _{{\cal V}}^{\ker \pi _{\ast }}\left( L_{j}\right) $%
. Hence, from (\ref{eq-GCDI-(1)}), we get%
\begin{equation}
\inf \left\{ \tau _{{\cal V}}^{\ker \pi _{\ast }}(p)-\sum_{j=1}^{k}\tau _{%
{\cal V}}^{\ker \pi _{\ast }}\left( L_{j}\right) \right\} \geq \inf \left\{
\tau _{{\cal V}}^{M_{1}}(p)-\sum_{j=1}^{k}\tau _{{\cal V}}^{M_{1}}\left(
L_{j}\right) -c\left( r_{1},\ldots ,r_{k}\right) \left\Vert H\right\Vert
^{2}\right\} .  \label{eq-GCDI-(2)}
\end{equation}%
Since for a fixed $T_{p}M$ we have $\tau _{{\cal V}}^{\ker \pi _{\ast }}(p)$%
, $\tau _{{\cal V}}^{M_{1}}(p)$ and $c\left( r_{1},\ldots ,r_{k}\right)
\left\Vert H\right\Vert ^{2}$ are real constant. Thus from (\ref{eq-GCDI-(2)}%
), we have%
\[
\tau _{{\cal V}}^{\ker \pi _{\ast }}(p)-\sup \left\{ \sum_{j=1}^{k}\tau _{%
{\cal V}}^{\ker \pi _{\ast }}\left( L_{j}\right) \right\} \geq \tau _{{\cal V%
}}^{M_{1}}(p)-\sup \left\{ \sum_{j=1}^{k}\tau _{{\cal V}}^{M_{1}}\left(
L_{j}\right) \right\} -c\left( r_{1},\ldots ,r_{k}\right) \left\Vert
H\right\Vert ^{2}.
\]%
which in view of (\ref{eq-P-D-(14.1)}), gives (\ref{eq-GCDI}). Equality in (%
\ref{eq-GCDI}) holds at a point $p\in M_{1}$ if and only if it follows
equality cases of Theorem \ref{Theorem 2} and for any $k$ mutually
orthogonal subspaces $L_{1},\ldots ,L_{k}$ of ${\cal V}_{p}$ satisfying (\ref%
{eq-GCDI-(01)}), we have (\ref{eq-GCDI-(02)}).
\end{proof}

\begin{theorem}
Let $\pi :\left( M_{1}\left( c\right) ,g_{1}\right) \rightarrow \left(
M_{2},g_{2}\right) $ be a Riemannian submersion from real space form onto a
Riemannian manifold. Let $r_{1},\ldots ,r_{k}$ be intigers $\geq 2$
satisfying $r_{1}<r$, $r_{1}+\cdots +r_{k}\leq r$. For $p\in M_{1}$, let $%
L_{j}$ be an $r_{j}$-plane section of $\left( \ker \pi _{\ast }\right) _{p}$%
, $j=1,\ldots ,k$. Then we have%
\begin{equation}
\tau _{{\cal V}}^{\ker \pi _{\ast }}(p)-\sum_{j=1}^{k}\tau _{{\cal V}}^{\ker
\pi _{\ast }}(L_{j})\geq \frac{1}{2}c\left\{ r\left( r-1\right)
-\sum_{j=1}^{k}r_{j}\left( r_{j}-1\right) \right\} -c(r_{1},\ldots
,r_{k})\left\Vert H\right\Vert ^{2}.  \label{eq-GRSF-(1)}
\end{equation}%
Equality case follows Theorem {\rm \ref{Theorem 2}}.
\end{theorem}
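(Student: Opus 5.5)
The plan is to specialize Theorem \ref{Theorem 2} to the case where $M_{1}$ is a real space form. That theorem already gives
\[
\tau _{{\cal V}}^{\ker \pi _{\ast }}(p)-\sum_{j=1}^{k}\tau _{{\cal V}}^{\ker \pi _{\ast }}\left( L_{j}\right) \geq \tau _{{\cal V}}^{M_{1}}(p)-\sum_{j=1}^{k}\tau _{{\cal V}}^{M_{1}}\left( L_{j}\right) -c\left( r_{1},\ldots ,r_{k}\right) \left\Vert H\right\Vert ^{2}.
\]
So the only remaining work is to evaluate the two ambient quantities $\tau _{{\cal V}}^{M_{1}}(p)$ and $\tau _{{\cal V}}^{M_{1}}(L_{j})$ explicitly, using the curvature tensor (\ref{eq-RSF}).

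For the evaluation, note that (\ref{eq-RSF}) together with the Kulkarni--Nomizu product (\ref{eq-KN-product}) gives
\[
R^{M_{1}}(X,Y,Z,W)=c\{g_{1}(Y,Z)g_{1}(X,W)-g_{1}(X,Z)g_{1}(Y,W)\}.
\]
For orthonormal vertical vectors $V_{i}\neq V_{j}$ this yields $R^{M_{1}}(V_{i},V_{j},V_{j},V_{i})=c$, and the term vanishes when $i=j$. Summing over $i,j=1,\ldots ,r$ as in (\ref{eq-P-(9.2.1)}) gives $2\tau _{{\cal V}}^{M_{1}}(p)=c\,r(r-1)$. Likewise, choosing an orthonormal basis of each $L_{j}$ inside ${\cal V}_{p}$, (\ref{eq-P-(9.2.0)}) gives $2\tau _{{\cal V}}^{M_{1}}(L_{j})=c\,r_{j}(r_{j}-1)$. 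These values do not depend on the chosen bases, or even on which subspaces $L_{j}$ are taken, since the ambient sectional curvature is constant.

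Substituting both values into (\ref{eq-GCFI}) produces exactly
\[
\frac{1}{2}c\Big\{r(r-1)-\sum_{j=1}^{k}r_{j}(r_{j}-1)\Big\}-c(r_{1},\ldots ,r_{k})\left\Vert H\right\Vert ^{2},
\]
which is the right-hand side of (\ref{eq-GRSF-(1)}). For the equality case, the substitution is an identity, so equality in (\ref{eq-GRSF-(1)}) holds precisely when equality holds in (\ref{eq-GCFI}). That reduces it to the algebraic conditions on ${\cal T}^{{\cal H}}$ stated in Theorem \ref{Theorem 2}.

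There is no real obstacle here. The one point needing care is notational: the symbol $c$ is used both for the constant sectional curvature and for the coefficient $c(r_{1},\ldots ,r_{k})$. I would keep these visibly distinct, and I would check the sign convention $R(X,Y,Y,X)=K(X,Y)$ against (\ref{eq-KN-product}), so that the curvature sum contributes $+c$ rather than $-c$.
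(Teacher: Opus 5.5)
Your proposal is correct and is essentially the paper's own proof. Both use (\ref{eq-RSF}) to compute $\tau _{{\cal V}}^{M_{1}}(p)=\frac{c}{2}r(r-1)$ and $\sum_{j}\tau _{{\cal V}}^{M_{1}}(L_{j})=\frac{c}{2}\sum_{j}r_{j}(r_{j}-1)$, substitute these into (\ref{eq-GCFI}), and inherit the equality case directly from Theorem \ref{Theorem 2}.
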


\begin{proof}
By (\ref{eq-RSF}) and (\ref{eq-P-(9.2.1)}), we get%
\begin{equation}
\sum_{j=1}^{k}\tau _{{\cal V}}^{M_{1}}\left( L_{j}\right) =\frac{c}{2}%
\sum_{j=1}^{k}r_{j}\left( r_{j}-1\right) ,\quad \tau _{{\cal V}}^{M_{1}}(p)=%
\frac{c}{2}r\left( r-1\right) .  \label{eq-GRSF-(1.1)}
\end{equation}%
In view of (\ref{eq-GRSF-(1.1)})and (\ref{eq-GCFI}), we get (\ref%
{eq-GRSF-(1)}).
\end{proof}

\begin{theorem}
Let $\pi :\left( M_{1}\left( c\right) ,g_{1}\right) \rightarrow \left(
M_{2},g_{2}\right) $ be a Riemannian submersion from complex space form onto
a Riemannian manifold. Let $r_{1},\ldots ,r_{k}$ be intigers $\geq 2$
satisfying $r_{1}<r$, $r_{1}+\cdots +r_{k}\leq r$. For $p\in M_{1}$, let $%
L_{j}$ be an $r_{j}$-plane section of $\left( \ker \pi _{\ast }\right) _{p}$%
, $j=1,\ldots ,k$. Then we have%
\begin{eqnarray}
\tau _{{\cal V}}^{\ker \pi _{\ast }}(p)-\sum_{j=1}^{k}\tau _{{\cal V}}^{\ker
\pi _{\ast }}(L_{j}) &\geq &\frac{c}{8}\left\{ r\left( r-1\right)
-\sum_{j=1}^{k}r_{j}\left( r_{j}-1\right) \right\}   \nonumber \\
&&+\frac{3c}{8}\left\{ \left\Vert Q\right\Vert ^{2}-2\sum_{j=1}^{k}\Psi
\left( L_{j}\right) \right\} -c(r_{1},\ldots ,r_{k})\left\Vert H\right\Vert
^{2},  \label{eq-GCFGCSF}
\end{eqnarray}%
where $\Psi \left( L_{j}\right) =\sum_{1\leq i<j\leq r_{j}}g\left(
QV_{i},V_{j}\right) ^{2}$. Equality case follows Theorem {\rm \ref{Theorem 2}%
}.
\end{theorem}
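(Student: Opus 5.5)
This is the same substitution argument as for the real space form case: insert the curvature tensor (\ref{eq-GCSF}) of $M_{1}(c)$ into the right-hand side of (\ref{eq-GCFI}) of Theorem \ref{Theorem 2}. Since (\ref{eq-GCFI}) holds for any Riemannian submersion and any mutually orthogonal $L_{1},\ldots ,L_{k}\subset {\cal V}_{p}$, all that remains is to evaluate $\tau _{{\cal V}}^{M_{1}}(p)$ and $\tau _{{\cal V}}^{M_{1}}(L_{j})$ explicitly. The equality case is then inherited verbatim from Theorem \ref{Theorem 2}.

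First I compute the sectional-type quantity $R^{M_{1}}(V_{i},V_{j},V_{j},V_{i})$ for orthonormal vertical $V_{i},V_{j}$ with $i\neq j$. Expanding (\ref{eq-KN-product}) and (\ref{eq-KN-symm-product}), $\frac{c}{8}(g\circledast g)$ contributes $\frac{c}{4}$ in the paper's normalization. The $J^{\flat }$ terms contribute a multiple of $g(V_{i},JV_{j})^{2}$, since $J^{\flat }(V_{i},V_{i})=0$ by skew-symmetry. Using $JV_{j}=PV_{j}+QV_{j}$ with $PV_{j}$ horizontal, we get $g(V_{i},JV_{j})=g(V_{i},QV_{j})$, so
\[
R^{M_{1}}(V_{i},V_{j},V_{j},V_{i})=\tfrac{c}{4}\bigl(1+3g(QV_{i},V_{j})^{2}\bigr),
\]
which is the standard complex space form sectional curvature $\frac{c}{4}(1+3g(X,JY)^{2})$. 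I will fix the precise constants so that they agree with the normalization used in the real space form theorem, where (\ref{eq-GRSF-(1.1)}) gives $\tau _{{\cal V}}^{M_{1}}(p)=\frac{c}{2}r(r-1)$.

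Summing over $i\neq j$ in $\{1,\ldots ,r\}$ with (\ref{eq-P-(9.2.1)}) gives
\[
\tau _{{\cal V}}^{M_{1}}(p)=\tfrac{c}{8}r(r-1)+\tfrac{3c}{8}\|Q\|^{2},
\]
where $\|Q\|^{2}=\sum_{i,j}g(QV_{i},V_{j})^{2}$ and the diagonal terms vanish. Summing over pairs inside each $L_{j}$, taking the adapted basis $\{V_{\alpha }\}_{\alpha \in D_{j}}$ of $L_{j}$ from the proof of Theorem \ref{Theorem 2}, gives
\[
\tau _{{\cal V}}^{M_{1}}(L_{j})=\tfrac{c}{8}r_{j}(r_{j}-1)+\tfrac{3c}{4}\Psi (L_{j}).
\]
Here the factor $2$ arises because $\Psi$ sums only over ordered pairs $i<j$. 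Subtracting and substituting into (\ref{eq-GCFI}) yields (\ref{eq-GCFGCSF}).

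The main obstacle is bookkeeping of constants, not any conceptual difficulty. Three things must be reconciled: the factor $\frac12$ in the definition $2\tau =\sum _{i,j}$, the coefficients $\frac{c}{8}$ and $\frac{c}{4}$ in (\ref{eq-GCSF}) together with the expansion of $J^{\flat }\circledast J^{\flat }$ versus $J^{\flat }\circledcirc J^{\flat }$, and the symmetric versus $i<j$ summation in $\Psi$. I would first check the constants on the single case $r=2$, $k=0$ against the known holomorphic sectional curvature $c$ (for $V_{2}=JV_{1}$) and then propagate them. If the $\frac{c}{8}$ in front of the $r(r-1)$ term turns out inconsistent with the real space form normalization, I would keep the constants as dictated by (\ref{eq-GCSF}), since the stated inequality follows that formula.
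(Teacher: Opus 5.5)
Your proposal is correct and is the paper's argument: the paper also just inserts $\tau^{M_1}_{\mathcal V}(p)=\frac{c}{8}\{r(r-1)+3\|Q\|^2\}$ and $\sum_j\tau^{M_1}_{\mathcal V}(L_j)=\frac{c}{8}\{\sum_j r_j(r_j-1)+6\sum_j\Psi(L_j)\}$ into (\ref{eq-GCFI}), and takes the equality case from Theorem~\ref{Theorem 2}. One caution: the $+3$ has to come from the standard sectional curvature $\frac{c}{4}(1+3g(X,JY)^2)$, as you assert, because a literal expansion of the paper's definition (\ref{eq-KN-symm-product}) gives $J^{\flat}\circledcirc J^{\flat}=2J^{\flat}\circledast J^{\flat}$ and hence $\frac{c}{4}(1-3g(X,JY)^2)$, an inconsistency your holomorphic-curvature check would catch.
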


\begin{proof}
 By (\ref{eq-GCSF}) and (\ref{eq-P-(9.2.1)}), we get%
\begin{equation}
\sum_{j=1}^{k}\tau _{{\cal V}}^{M_{1}}\left( L_{j}\right) =\frac{c}{8}%
\left\{ \sum_{j=1}^{k}r_{j}\left( r_{j}-1\right) +6\sum_{j=1}^{k}\Psi \left(
L_{j}\right) \right\} ,  \label{eq-GCFGCSF-(1.1)}
\end{equation}%
\begin{eqnarray}
\tau _{{\cal V}}^{M_{1}}(p) &=&\frac{c}{8}\left\{ r\left( r-1\right)
+3\sum_{i,j=1}^{r}g\left( QV_{i},V_{j}\right) ^{2}\right\}  \nonumber \\
&=&\frac{c}{8}\left\{ r\left( r-1\right) +3\left\Vert Q\right\Vert
^{2}\right\} .  \label{eq-GCFGCSF-(1.2)}
\end{eqnarray}%
In view of (\ref{eq-GCFGCSF-(1.1)}), (\ref{eq-GCFGCSF-(1.2)}) and (\ref%
{eq-GCFI}), we get (\ref{eq-GCFGCSF}).
\end{proof}

\section{Riemannian map \label{sec 4}}

In this section, we give some basic concepts to prove the main inequalities.%
\newline
Let $\pi: (M^m, g_1) \to (N^n, g_2)$ be a smooth map between Riemannian
manifolds with $0 < {\text{\normalfont rank }} \pi < \min\{m,n\}$, and let $%
\pi_{\ast p}: T_p M \to T_{\pi(p)} N$ be its differential map at $p$.
Denoting the kernel space of $\pi_\ast$ at $p \in M$ by ${\cal V}_p = (\ker
\pi_{\ast p})$ and its orthogonal complementary space in the tangent space $%
T_p M$ by ${\cal H}_{p}= (\ker \pi_{\ast p})^\perp$, we have 
\[
T_{p}M = {\cal V}_p \oplus {\cal H}_p. 
\]
Similarly, we have 
\[
T_{\pi(p)} N = {\cal R}_p \oplus {\cal R}_p^\perp, 
\]
where ${\cal R}_p = ({\text{\normalfont range }} \pi_{\ast p})$ denotes the
range of $\pi_\ast$ and its orthogonal complementary space is ${\cal R}%
_p^\perp= ({\text{\normalfont range }} \pi_{\ast p})^\perp$ in the tangent
space $T_{\pi(p)} N$. Then, the map $\pi$ is said to be a {\it Riemannian map%
}, if for all $X, Y \in \Gamma({\cal H})$ the following equation satisfies 
\cite{Fischer_1992}: 
\begin{equation}  \label{riemannian_map}
g_1(X, Y) = g_2(\pi_\ast X, \pi_\ast Y).
\end{equation}
The map $\pi_\ast$ can be viewed as a section of the bundle {\rm Hom}$%
(TM,\pi^{-1}TN)$ $\to M$, where $\pi^{-1}TN$ is the pullback bundle. The
bundle {\rm Hom}$(TM,\pi^{-1}TN)$ has a connection $\nabla$ induced by $%
\nabla^M$. With this, the symmetric {\it second fundamental form} of $\pi$
is given by \cite{Nore_1986} 
\begin{equation}  \label{eqn_sff}
(\nabla \pi_\ast) (\vartheta_1,\vartheta_2) = \nabla_{\pi_\ast
\vartheta_1}^N \pi_\ast \vartheta_2 - \pi_\ast({\nabla}_{{\vartheta_1}}^M
\vartheta_2),
\end{equation}
for all $\vartheta_1, \vartheta_2 \in \Gamma(TM)$. In addition, for $X, Y
\in \Gamma({\cal H})$, we have $(\nabla \pi_\ast) (X,Y)\in \Gamma({\cal R}%
^\perp)$ \cite{Sahin_2010}. Moreover, its trace gives the {\it tension field}
$\tau^{{\cal H}}$ of ${\cal H}$. In other words, suppose $\{h_i\}_{i=1}^{r}$
is an orthonormal basis of ${\cal H}$, then by \cite{Sahin_book} 
\[
{\rm trace}~B^{{\cal H}} = \sum\limits_{i=1}^{r} ((\nabla \pi_{\ast})(h_i,
h_i)). 
\]

For any vector field $X$ on $M$ and any section $V$ of ${\cal R}^{\perp }$,
we have $\nabla _{X}^{\pi \bot }V$, which is the orthogonal projection of $%
\nabla _{X}^{N}V$ on ${\cal R}^{\perp }$, where $\nabla ^{\pi \bot }$ is a
linear connection on ${\cal R}^{\perp }$ such that $\nabla ^{\pi \bot
}g_{2}=0$. Then for $\pi $, {\it shape operator} ${\cal S}_{V}$ is defined
as \cite[p. 188]{Sahin_book}: 
\[
\nabla _{\pi _{\ast }X}^{N}V=-{\cal S}_{V}\pi _{\ast }X+\nabla _{X}^{\pi
\bot }V,
\]%
At $p\in M$, we have $\nabla _{\pi _{\ast }X}^{N}V(p)\in T_{\pi (p)}N$, $%
{\cal S}_{V}\pi _{\ast }X\in \pi _{\ast p}(T_{p}M)$ and $\nabla _{X}^{\pi
\bot }V(p)\in (\pi _{\ast p}(T_{p}M))^{\bot }$. Furthermore, the Gauss
equation for $\pi $ is defined as \cite[p. 189]{Sahin_book} {\small 
\begin{eqnarray}
g_{2}\left( R^{{M_{2}}}\left( \pi _{\ast }Z_{1},\pi _{\ast }Z_{2}\right) \pi
_{\ast }Z_{3},\pi _{\ast }Z_{4}\right)  &=&g_{1}\left( R^{{M_{1}}}\left(
Z_{1},Z_{2}\right) Z_{3},Z_{4}\right) +g_{2}\left( \left( \nabla \pi _{\ast
}\right) \left( Z_{1},Z_{3}\right) ,\left( \nabla \pi _{\ast }\right) \left(
Z_{2},Z_{4}\right) \right)   \nonumber \\
&&-g_{2}\left( \left( \nabla \pi _{\ast }\right) \left( Z_{1},Z_{4}\right)
,\left( \nabla \pi _{\ast }\right) \left( Z_{2},Z_{3}\right) \right) ,
\label{Gauss eq-RM}
\end{eqnarray}%
}where $Z_{i}\in \Gamma \left( \ker \pi _{\ast }\right) ^{\perp }$. Here, $%
R^{{M_{1}}}$ and $R^{{M_{2}}}$ denote the curvature tensors of $M_{1}$ and $%
M_{2}$, respectively.\newline
At a point $p\in M_{1}$, suppose that $\{Z_{i}\}_{i=1}^{r}$ is an
orthonormal basis of the horizontal space. Then scalar curvatures $2~scal^{%
{\cal H}}$ and $2~scal^{{\cal R}}$ on the horizontal and range spaces are
given, respectively, by 
\begin{equation}
2{~{\rm scal}}^{{\cal H}}=\sum\limits_{i,j=1}^{r}g_{1}\left( R^{{M_{1}}%
}(Z_{i},Z_{j})Z_{j},Z_{i}\right) ,\ 2{~{\rm scal}}^{{\cal R}%
}=\sum\limits_{i,j=1}^{r}g_{2}\left( R^{{M_{2}}}(\pi _{\ast }Z_{i},\pi
_{\ast }Z_{j})\pi _{\ast }Z_{j},\pi _{\ast }Z_{i}\right) .  \label{2scalH}
\end{equation}%
\begin{equation}
2{~{\rm scal}}^{{\cal H}}\left( L_{j}\right)
=\sum\limits_{i,j=1}^{r_{j}}g_{1}\left( R^{{M_{1}}}(Z_{i},Z_{j})Z_{j},Z_{i}%
\right) ,\ 2{~{\rm scal}}^{{\cal R}}\left( \pi _{\ast }L_{j}\right)
=\sum\limits_{i,j=1}^{r_{j}}g_{2}\left( R^{{M_{2}}}(\pi _{\ast }Z_{i},\pi
_{\ast }Z_{j})\pi _{\ast }Z_{j},\pi _{\ast }Z_{i}\right) .
\label{eq-scalh-scalR-LJ-1}
\end{equation}%
Supposing $\{V_{r+1},\dots ,V_{m_{2}}\}$ an orthonormal basis of $\left( 
{\rm range~}\pi _{\ast }\right) ^{\perp }$ we set, 
\begin{eqnarray}
B_{ij}^{{\cal H}^{\alpha }} &=&g_{2}\left( (\nabla \pi _{\ast
})(Z_{i},Z_{j}),V_{\alpha }\right) ,\quad i,j=1,\dots ,r,\quad \alpha
=r+1,\dots ,m_{2},  \label{eq-Bij} \\
\left\Vert B^{{\cal H}}\right\Vert ^{2} &=&\sum_{i,j=1}^{r}g_{2}\left(
(\nabla \pi _{\ast })(Z_{i},Z_{j}),(\nabla \pi _{\ast })(Z_{i},Z_{j})\right)
,  \label{eq-norm-BH} \\
{\rm trace\,}B^{{\cal H}} &=&\sum_{i=1}^{r}(\nabla \pi _{\ast })\left(
Z_{i},Z_{i}\right) .  \label{eq-trace-BH}
\end{eqnarray}

\section{Generalized Chen's inequality for Riemannian maps \label{sec 5}}

\begin{theorem}
\label{Theorem RM 1}Let $\pi :\left( M_{1},g_{1}\right) \rightarrow \left(
M_{2},g_{2}\right) $ be a Riemannian map between Riemannian manifolds. Let $%
r_{1},\ldots ,r_{k}$ be intigers $\geq 2$ satisfying $r_{1}<r$, $%
r_{1}+\cdots +r_{k}\leq r$. For $p\in M_{1}$, let $L_{j}$ be an $r_{j}$%
-plane section of $T_{p}M_{1}$, $j=1,\ldots ,k$. Then we have%
\begin{equation}
{\rm scal}^{{\cal H}}-\sum_{j=1}^{k}{\rm scal}^{{\cal H}}\left( L_{j}\right)
\leq {\rm scal}^{{\cal R}}-\sum_{j=1}^{k}{\rm scal}^{{\cal R}}\left( \pi
_{\ast }L_{j}\right) +c\left( r_{1},\ldots ,r_{k}\right) \left\Vert {\rm %
trace\,}B^{{\cal H}}\right\Vert ^{2},  \label{eq-GCFI-RM}
\end{equation}%
for any $k$-tuple $\left( r_{1},\ldots ,r_{k}\right) \in S\left( r\right) $,
and equality holds at $p\in M_{1}$ if and only if there exist orthonormal
bases $\{h_{1},\ldots ,h_{s}\}$, $\{\pi _{\ast }h_{1},\ldots ,\pi _{\ast
}h_{s}\}$ and $\{V_{r+1},\ldots ,V_{n}\}$ of ${\cal H}_{p}$, ${\cal R}$ and $%
{\cal R}^{\perp }$, respectively, such that%
\begin{eqnarray*}
\sum_{\alpha _{1}\in D_{1}}a_{\alpha _{1}} &=&\sum_{\alpha _{2}\in
D_{2}}a_{\alpha _{2}}=\cdots =\sum_{\alpha _{r}\in D_{r}}a_{\alpha
_{r}}=a_{_{\left( \sum_{j=1}^{k}r_{j}\right) +1}}=\cdots =a_{r}, \\
\left( B^{{\cal H}}\right) _{\lambda \mu }^{r+1} &=&0,\quad \left( \lambda
,\mu \neq \lambda \right) \notin D^{2},\quad \lambda ,\mu \in \left\{
1,\ldots ,r\right\} , \\
\left( B^{{\cal H}}\right) _{\lambda \mu }^{\alpha } &=&0,\quad \alpha
=r+2,\ldots ,n,\quad \lambda ,\mu \in \left\{ 1,\ldots ,r\right\} , \\
\sum_{\alpha _{j}\in D_{j}}B_{\alpha _{j}\alpha _{j}}^{{\cal H}^{\alpha }}
&=&0,\quad \alpha =r+2,\ldots ,n,\quad j=1,\ldots k.
\end{eqnarray*}
\end{theorem}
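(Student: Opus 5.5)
The plan is to run the proof of Theorem \ref{Theorem 2} again, with the Gauss equation (\ref{Gauss eq-RM}) of the Riemannian map in place of O'Neill's equation (\ref{eq-P-(10)}), and with $B^{{\cal H}}$ in place of ${\cal T}^{{\cal H}}$. Here $r=\dim{\cal H}_p$ and the subspaces $L_j$ are taken inside ${\cal H}_p$.

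The first step is to fix an orthonormal basis $\{Z_1,\ldots,Z_r\}$ of ${\cal H}_p$. Then $\{\pi_\ast Z_i\}$ is orthonormal in ${\cal R}$ by (\ref{riemannian_map}). Choose an orthonormal basis $\{V_{r+1},\ldots\}$ of ${\cal R}^\perp$ so that $V_{r+1}$ is parallel to ${\rm trace}\,B^{{\cal H}}$, and set $a_i=B^{{\cal H}^{r+1}}_{ii}$. Contracting (\ref{Gauss eq-RM}) over $i,j$ gives
\[
2{\rm scal}^{{\cal R}}=2{\rm scal}^{{\cal H}}+\|{\rm trace}\,B^{{\cal H}}\|^2-\|B^{{\cal H}}\|^2 .
\]
The sign convention in (\ref{Gauss eq-RM}) places the second fundamental form terms on the $M_1$ side. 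This is why the inequality comes out with $\leq$ and $+c\|{\rm trace}\,B^{{\cal H}}\|^2$, the mirror image of (\ref{eq-GCFI}). Next, put $\eta=2{\rm scal}^{{\cal R}}-2{\rm scal}^{{\cal H}}-2c(r_1,\ldots,r_k)\|{\rm trace}\,B^{{\cal H}}\|^2$ and $\gamma=r+k-\sum r_j$, with $c$ as in (\ref{eq-GCFI-(1.1b)}). With these, the identity rearranges to $(\sum a_i)^2=\gamma(\|B^{{\cal H}}\|^2+\eta)$, which is the analogue of (\ref{GCFI-(1.1)}).

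The second step is the regrouping. Take the index blocks $D_1,\ldots,D_k$ and the grouped variables $b_1=a_1$, $b_2=a_2+\cdots+a_{r_1}$, $b_3,\ldots,b_{\gamma+1}$ exactly as in the proof of Theorem \ref{Theorem 2}. Then apply Lemma \ref{Lemma 2} with $\gamma+1$ numbers. This yields the analogue of (\ref{eq-GCFI-(1.3)}), namely that $\sum_j\sum_{\alpha_j<\beta_j}a_{\alpha_j}a_{\beta_j}$ is bounded below by $\eta/2$ plus half the sums of squares of the remaining components of $B^{{\cal H}}$.

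The third step uses the Gauss equation restricted to each $L_j={\rm span}\{Z_\alpha:\alpha\in D_j\}$. It gives ${\rm scal}^{{\cal R}}(\pi_\ast L_j)-{\rm scal}^{{\cal H}}(L_j)=\sum_{\ell}\sum_{\alpha_j<\beta_j}\bigl(B^\ell_{\alpha_j\alpha_j}B^\ell_{\beta_j\beta_j}-(B^\ell_{\alpha_j\beta_j})^2\bigr)$. Substituting the bound from the second step and completing squares for $\ell\ge r+2$ gives
\[
\sum_j\bigl({\rm scal}^{{\cal R}}(\pi_\ast L_j)-{\rm scal}^{{\cal H}}(L_j)\bigr)\geq \eta/2 ,
\]
which uses $\sum_{\alpha<\beta}x_\alpha x_\beta+\frac12\sum x_\alpha^2=\frac12(\sum x_\alpha)^2$. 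Inserting the definition of $\eta$ and rearranging gives (\ref{eq-GCFI-RM}).

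For the equality case, equality must hold both in Lemma \ref{Lemma 2} and in every discarded square. Equality in the lemma gives the stated conditions on the sums $\sum_{\alpha_j\in D_j}a_{\alpha_j}$ and on $a_{(\sum r_j)+1},\ldots,a_r$. Equality in the discarded squares gives the vanishing of the off-block components in the $V_{r+1}$ direction, the vanishing of the components in the directions $\alpha\geq r+2$, and the zero block traces. The step I expect to be the main obstacle is the bookkeeping in the third step: checking that the terms regrouped by Lemma \ref{Lemma 2} cancel exactly against the Gauss-equation terms, with the signs reversed relative to Theorem \ref{Theorem 2}. I would first verify the case $k=1$, $r_1=2$ against the known Chen first inequality for Riemannian maps.
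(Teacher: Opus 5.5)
Your proposal has a sign error at the first step, and it carries through to the end. Contract (\ref{Gauss eq-RM}) with $(Z_1,Z_2,Z_3,Z_4)=(Z_i,Z_j,Z_j,Z_i)$. The added term gives $\|(\nabla\pi_\ast)(Z_i,Z_j)\|^2$, and the subtracted term gives $g_2((\nabla\pi_\ast)(Z_i,Z_i),(\nabla\pi_\ast)(Z_j,Z_j))$. Hence
\[
2\,{\rm scal}^{\mathcal H}=2\,{\rm scal}^{\mathcal R}+\|{\rm trace}\,B^{\mathcal H}\|^2-\|B^{\mathcal H}\|^2 ,
\]
which is (\ref{eq-scalh-scalR}) and has the opposite sign to your identity. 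As a check, take $S^n\subset\mathbb R^{n+1}$ crossed with a line. The correct identity reads $n(n-1)=0+n^2-n$, while yours would give $0=2n(n-1)$. Your block identity for ${\rm scal}^{\mathcal R}(\pi_\ast L_j)-{\rm scal}^{\mathcal H}(L_j)$ carries the same flipped sign.

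Your $\eta$ and your block identity are consistent with each other, so the Lemma \ref{Lemma 2} machinery does run. What it yields, however, is the un-mirrored statement. Inserting your $\eta$ into $\sum_j({\rm scal}^{\mathcal R}(\pi_\ast L_j)-{\rm scal}^{\mathcal H}(L_j))\geq\eta/2$ gives
\[
{\rm scal}^{\mathcal H}-\sum_{j=1}^{k}{\rm scal}^{\mathcal H}(L_j)\geq{\rm scal}^{\mathcal R}-\sum_{j=1}^{k}{\rm scal}^{\mathcal R}(\pi_\ast L_j)-c(r_1,\ldots,r_k)\|{\rm trace}\,B^{\mathcal H}\|^2 .
\]
This is the reverse of (\ref{eq-GCFI-RM}), so your final ``rearranging gives (\ref{eq-GCFI-RM})'' is false. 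In effect you copied the bookkeeping of Theorem \ref{Theorem 2} with ${\cal R}$ playing $M_1$ and ${\cal H}$ playing $\ker\pi_\ast$. That assignment is exactly what fails to produce the mirror image.

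This is not a harmless convention issue, because the reversed inequality is false. Take $\Sigma^3\subset\mathbb R^4$ minimal, with principal directions $e_i$ and principal curvatures $\lambda_i$ satisfying $\lambda_3\neq0$ at $p$ (e.g.\ catenoid $\times\,\mathbb R$). Let $\pi:\Sigma\times\mathbb R\to\mathbb R^4$, $(x,t)\mapsto x$, and take $k=1$, $L={\rm span}\{e_1,e_2\}$. Then ${\rm trace}\,B^{\mathcal H}=0$ and ${\rm scal}^{\mathcal R}=0$, while ${\rm scal}^{\mathcal H}-{\rm scal}^{\mathcal H}(L)=\lambda_3(\lambda_1+\lambda_2)=-\lambda_3^2<0$. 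Your planned $k=1$, $r_1=2$ check would have exposed this.

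The repair, which is what the paper does, is to reverse the roles. Put $\varepsilon=2\,{\rm scal}^{\mathcal H}-2\,{\rm scal}^{\mathcal R}-2c\|{\rm trace}\,B^{\mathcal H}\|^2$, so that $(\sum_i a_i)^2=\gamma(\|B^{\mathcal H}\|^2+\varepsilon)$. Use the block identity ${\rm scal}^{\mathcal H}(L_j)-{\rm scal}^{\mathcal R}(\pi_\ast L_j)=\sum_{\ell}\sum_{\alpha_j<\beta_j}\bigl(B^{\ell}_{\alpha_j\alpha_j}B^{\ell}_{\beta_j\beta_j}-(B^{\ell}_{\alpha_j\beta_j})^2\bigr)$. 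Then Lemma \ref{Lemma 2} and completing squares give $\sum_j({\rm scal}^{\mathcal H}(L_j)-{\rm scal}^{\mathcal R}(\pi_\ast L_j))\geq\varepsilon/2$. This does rearrange to (\ref{eq-GCFI-RM}), and your equality analysis carries over unchanged.

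Separately, $c$ must be normalized by $2c(r_1,\ldots,r_k)=(\gamma-1)/\gamma$, as in the paper's proof, not by (\ref{eq-GCFI-(1.1b)}). The factor $r^2$ in (\ref{eq-GCFI-(1.1b)}) belongs with $\|H\|^2=\|N\|^2/r^2$. Since the statement here is written with $\|{\rm trace}\,B^{\mathcal H}\|^2$, keeping that factor makes $(\sum_i a_i)^2=\gamma(\|B^{\mathcal H}\|^2+\eta)$ false.
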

\begin{proof}
Let $\left\{ h_{1},\ldots ,h_{r}\right\} $ and $\left\{ \pi
_{\ast }h_{1},\ldots ,\pi _{\ast }h_{r}\right\} $ be orthonormal bases of $%
{\cal H}$ and ${\cal R}$, respectively, the ${\rm trace\,}B^{{\cal H}}$ lies
along the the direction of $V_{r+1}$. For convenience we set%
\begin{equation}
a_{i}=\left( B^{{\cal H}}\right) _{ii}^{r+1}=g_{2}\left( (\nabla \pi _{\ast
})(h_{i},h_{i}),V_{r+1}\right) ,\quad i=1,\ldots ,r,  \label{eq-RM-1}
\end{equation}%
Define index sets 
\begin{eqnarray*}
D_{1} &=&\left\{ 1,\ldots ,r_{1}\right\} , \\
D_{2} &=&\left\{ r_{1}+1,\ldots ,r_{1}+r_{2}\right\}  \\
&&\vdots  \\
D_{k} &=&\left\{ r_{1}+\cdots +r_{k-1}+1,\ldots ,r_{1}+\cdots
+r_{k-1}+r_{k}\right\} 
\end{eqnarray*}%
Let $L_{1},\ldots ,L_{k}$ be mutually orthogonal subspaces of ${\cal R}$,
with $\dim L_{j}=r_{j}$, defined by%
\[
L_{j}={\rm span}\{e_{r_{1}+\cdots +r_{j-1}+1},...,e_{r_{1}+\cdots
+r_{j-1}+r_{j}}\},\quad j=1,...,k.
\]%
Then from (\ref{Gauss eq-RM}), (\ref{2scalH}), (\ref{eq-norm-BH}) and (\ref%
{eq-trace-BH}), we obtain%
\begin{equation}
2{~{\rm scal}}^{{\cal H}}=2{~{\rm scal}}^{{\cal R}}-\left\Vert B^{{\cal H}%
}\right\Vert ^{2}+\left\Vert {\rm trace\,}B^{{\cal H}}\right\Vert ^{2}
\label{eq-scalh-scalR}
\end{equation}%
we assume%
\begin{equation}
\varepsilon =2{~{\rm scal}}^{{\cal H}}-2{~{\rm scal}}^{{\cal R}}-2c\left(
r_{1},\ldots ,r_{k}\right) \left\Vert {\rm trace\,}B^{{\cal H}}\right\Vert
^{2}.  \label{eq-epsilon}
\end{equation}%
From (\ref{eq-scalh-scalR}) and (\ref{eq-epsilon}), we obtain 
\begin{equation}
\left\Vert {\rm trace\,}B^{{\cal H}}\right\Vert ^{2}=\gamma \left(
\varepsilon +\left\Vert B^{{\cal H}}\right\Vert ^{2}\right) 
\label{eq-norm-traceBH-epsilon}
\end{equation}%
where%
\[
\gamma =\left( r+k-\sum_{j=1}^{k}r_{j}\right) ,\ 2c\left( r_{1},\ldots
,r_{k}\right) =\frac{\left( r+k-1-\sum_{j=1}^{k}r_{j}\right) }{\left(
r+k-\sum_{j=1}^{k}r_{j}\right) }
\]%
Since ${\rm trace\,}B^{{\cal H}}$ lies in the direction of $V_{r+1}$, then
from (\ref{eq-norm-traceBH-epsilon}), we obtain 
\begin{equation}
\left( \sum_{i=1}^{r}B_{ii}^{{\cal H}^{r+1}}\right) ^{2}=\gamma \left(
\sum_{i=1}^{r}\left( B_{ii}^{{\cal H}^{r+1}}\right) ^{2}+\sum_{i\neq
j=1}^{r}\left( B_{ij}^{{\cal H}^{r+1}}\right) ^{2}+\sum_{\alpha
=r+2}^{n}\sum_{i,j=1}^{r}\left( B_{ij}^{{\cal H}^{\alpha }}\right)
^{2}+\varepsilon \right)   \label{eq-BH-dir-VR1}
\end{equation}%
From (\ref{eq-RM-1}) and (\ref{eq-BH-dir-VR1}), we obtain%
\begin{equation}
\left( \sum_{i=1}^{r}a_{i}\right) ^{2}=\gamma \left(
\sum_{i=1}^{r}a_{i}^{2}+\sum_{i\neq j=1}^{r}\left( B_{ij}^{{\cal H}%
^{r+1}}\right) ^{2}+\sum_{\alpha =r+2}^{n}\sum_{i,j=1}^{r}\left( B_{ij}^{%
{\cal H}^{\alpha }}\right) ^{2}+\varepsilon \right)   \label{eq-ai-square}
\end{equation}%
equation (eq-ai-square) can be written as%
\begin{eqnarray}
\left( \sum_{i=1}^{r}a_{i}\right) ^{2} &=&\gamma \left\{
\sum_{i=1}^{r}a_{i}^{2}+\sum_{i\neq j=1}^{r}\left( B_{ij}^{{\cal H}%
^{r+1}}\right) ^{2}+\sum_{\alpha =r+2}^{n}\sum_{i,j=1}^{r}\left( B_{ij}^{%
{\cal H}^{\alpha }}\right) ^{2}+\varepsilon \right.   \nonumber \\
&&\left. +\sum_{2\leq \alpha _{1}\neq \beta _{1}\leq r_{1}}a_{\alpha
_{1}}a_{\beta _{1}}-\sum_{2\leq \alpha _{1}\neq \beta _{1}\leq
r_{1}}a_{\alpha _{1}}a_{\beta _{1}}+\sum_{\alpha _{2}\neq \beta
_{2}}a_{\alpha _{2}}a_{\beta _{2}}\right.   \nonumber \\
&&\left. -\sum_{\alpha _{2}\neq \beta _{2}}a_{\alpha _{2}}a_{\beta
_{2}}+\cdots +\sum_{\alpha _{k}\neq \beta _{k}}a_{\alpha _{k}}a_{\beta
_{k}}-\sum_{\alpha _{k}\neq \beta _{k}}a_{\alpha _{k}}a_{\beta _{k}}\right\} 
\label{eq-3-di}
\end{eqnarray}%
for $\alpha _{2},\beta _{2}\in D_{2},\ldots ,\alpha _{k},\beta _{k}\in D_{k}$%
. Denoting%
\begin{eqnarray}
b_{1} &=&a_{1},\ b_{2}=a_{2}+\cdots +a_{r_{1}},\ b_{3}=a_{r_{1}+1}+\cdots
+a_{r_{1}+r_{2}},\cdots   \nonumber \\
b_{k+1} &=&a_{r_{1}+\cdots +r_{k-1}+1}+\cdots +a_{r_{1}+\cdots
+r_{k-1}+r_{k}},\quad b_{k+2}=b_{\left( k+1\right) +1}=a_{\left(
\sum_{j=1}^{k}r_{j}\right) +1}  \nonumber \\
b_{\gamma +1} &=&b_{\left( r+k-\sum_{j=1}^{k}r_{j}\right) +1}=b_{k+1+\left(
r-\sum_{j=1}^{k}r_{j}\right) }=a_{\left( \sum_{j=1}^{k}r_{j}\right) +\left(
r-\sum_{j=1}^{k}r_{j}\right) }=a_{r}  \label{eq-assuming-bi}
\end{eqnarray}%
From (\ref{eq-3-di}) and (\ref{eq-assuming-bi}), we obtain%
\begin{eqnarray}
\left( \sum_{i=1}^{\gamma +1}b_{i}\right) ^{2} &=&\gamma \left\{
\sum_{i=1}^{\gamma }b_{i}^{2}+\sum_{i\neq j=1}^{r}\left( B_{ij}^{{\cal H}%
^{r+1}}\right) ^{2}+\sum_{\alpha =r+2}^{n}\sum_{i,j=1}^{r}\left( B_{ij}^{%
{\cal H}^{\alpha }}\right) ^{2}+\varepsilon \right.   \nonumber \\
&&\left. -\sum_{2\leq \alpha _{1}\neq \beta _{1}\leq r_{1}}a_{\alpha
_{1}}a_{\beta _{1}}-\sum_{\alpha _{2}\neq \beta _{2}}a_{\alpha _{2}}a_{\beta
_{2}}-\cdots -\sum_{\alpha _{k}\neq \beta _{k}}a_{\alpha _{k}}a_{\beta
_{k}}\right\}   \label{eq-bi-square}
\end{eqnarray}%
Applying Lemma \ref{Lemma 2}\ in (\ref{eq-bi-square}), we obtain%
\begin{eqnarray}
2b_{1}b_{2} &\geq &\left\{ \sum_{i\neq j=1}^{r}\left( B_{ij}^{{\cal H}%
^{r+1}}\right) ^{2}+\sum_{\alpha =r+2}^{n}\sum_{i,j=1}^{r}\left( B_{ij}^{%
{\cal H}^{\alpha }}\right) ^{2}+\varepsilon \right.   \nonumber \\
&&\left. -\sum_{2\leq \alpha _{1}\neq \beta _{1}\leq r_{1}}a_{\alpha
_{1}}a_{\beta _{1}}-\sum_{\alpha _{2}\neq \beta _{2}}a_{\alpha _{2}}a_{\beta
_{2}}-\cdots -\sum_{\alpha _{k}\neq \beta _{k}}a_{\alpha _{k}}a_{\beta
_{k}}\right\}   \label{eq-2b1b2}
\end{eqnarray}%
equation (\ref{eq-2b1b2}) can be written as%
\begin{eqnarray}
&&\sum_{\alpha _{1}<\beta _{1}}a_{\alpha _{1}}a_{\beta _{1}}+\sum_{\alpha
_{2}<\beta _{2}}a_{\alpha _{2}}a_{\beta _{2}}+\cdots +\sum_{\alpha
_{k}<\beta _{k}}a_{\alpha _{k}}a_{\beta _{k}}  \nonumber \\
&&\ \ \ \ \ \ \ \left. \geq \frac{1}{2}\sum_{i\neq j=1}^{r}\left( B_{ij}^{%
{\cal H}^{r+1}}\right) ^{2}+\frac{1}{2}\sum_{\alpha
=r+2}^{n}\sum_{i,j=1}^{r}\left( B_{ij}^{{\cal H}^{\alpha }}\right) ^{2}+%
\frac{\varepsilon }{2}\right.   \label{eq-alpha1aplha2}
\end{eqnarray}%
where $\alpha _{j},\beta _{j}\in D_{j}$. $j=1,\ldots ,k$. From (\ref{Gauss
eq-RM}), we obtain%
\begin{equation}
\sum_{\alpha _{j}<\beta _{j}}K^{{\cal H}}\left( L_{j}\right) =\sum_{\alpha
_{j}<\beta _{j}}K^{{\cal R}}\left( \pi _{\ast }L_{j}\right) +\sum_{\alpha
=r+1}^{n}\sum_{\alpha _{j}<\beta _{j}}\left( B_{\alpha _{j}\alpha _{j}}^{%
{\cal H}^{\alpha }}B_{\beta _{j}\beta _{j}}^{{\cal H}^{\alpha }}-\left(
B_{\alpha _{j}\beta _{j}}^{{\cal H}^{\alpha }}\right) ^{2}\right) ,
\label{eq-sect-LJ}
\end{equation}%
where $\alpha _{j},\beta _{j}\in D_{j}$, $j=1,\ldots ,k$. From (\ref{eq-scalh-scalR-LJ-1}) and (\ref{eq-sect-LJ}), we obtain%
\begin{eqnarray}
\sum_{j=1}^{k}{\rm scal}^{{\cal H}}\left( L_{j}\right)  &=&\sum_{j=1}^{k}%
{\rm scal}^{{\cal R}}\left( \pi _{\ast }L_{j}\right)
+\sum_{j=1}^{k}\sum_{\alpha _{j}<\beta _{j}}\left( B_{\alpha _{j}\alpha
_{j}}^{{\cal H}^{r+1}}B_{\beta _{j}\beta _{j}}^{{\cal H}^{r+1}}-\left(
B_{\alpha _{j}\beta _{j}}^{{\cal H}^{r+1}}\right) ^{2}\right)   \nonumber \\
&&+\sum_{j=1}^{k}\sum_{\alpha =r+2}^{n}\sum_{\alpha _{j}<\beta _{j}}\left(
B_{\alpha _{j}\alpha _{j}}^{{\cal H}^{\alpha }}B_{\beta _{j}\beta _{j}}^{%
{\cal H}^{\alpha }}-\left( B_{\alpha _{j}\beta _{j}}^{{\cal H}^{\alpha
}}\right) ^{2}\right)   \label{eq-scalh-scalR-LJ}
\end{eqnarray}%
Define%
\begin{eqnarray*}
D &=&D_{1}\cup D_{2}\cup \cdots \cup D_{k} \\
D^{2} &=&D_{1}\times D_{1}\cup D_{2}\times D_{2}\cup \cdots \cup D_{k}\times
D_{k}.
\end{eqnarray*}%
Substituting the value from (\ref{eq-alpha1aplha2}) into (\ref%
{eq-scalh-scalR-LJ-1}), we obtain%
\begin{eqnarray}
\sum_{j=1}^{k}{\rm scal}^{{\cal H}}\left( L_{j}\right)  &\geq &\sum_{j=1}^{k}%
{\rm scal}^{{\cal R}}\left( \pi _{\ast }L_{j}\right) +\frac{\varepsilon }{2}+%
\frac{1}{2}\sum_{i\neq j=1}^{r}\left( B_{ij}^{{\cal H}^{r+1}}\right) ^{2}+%
\frac{1}{2}\sum_{\alpha =r+2}^{n}\sum_{i,j=1}^{r}\left( B_{ij}^{{\cal H}%
^{\alpha }}\right) ^{2}  \nonumber \\
&&-\sum_{j=1}^{k}\sum_{\alpha _{j}<\beta _{j}}\left( B_{\alpha _{j}\beta
_{j}}^{{\cal H}^{r+1}}\right) ^{2}+\sum_{j=1}^{k}\sum_{\alpha
=r+2}^{n}\sum_{\alpha _{j}<\beta _{j}}B_{\alpha _{j}\alpha _{j}}^{{\cal H}%
^{\alpha }}B_{\beta _{j}\beta _{j}}^{{\cal H}^{\alpha
}}-\sum_{j=1}^{k}\sum_{\alpha =r+2}^{n}\sum_{\alpha _{j}<\beta _{j}}\left(
B_{\alpha _{j}\beta _{j}}^{{\cal H}^{\alpha }}\right) ^{2}  \label{eq-RM-2}
\end{eqnarray}%
From (\ref{eq-RM-2}), we obtain%
\begin{eqnarray}
\sum_{j=1}^{k}{\rm scal}^{{\cal H}}\left( L_{j}\right)  &\geq &\sum_{j=1}^{k}%
{\rm scal}^{{\cal R}}\left( \pi _{\ast }L_{j}\right) +\frac{\varepsilon }{2}+%
\frac{1}{2}\sum_{\left( \lambda ,\mu \neq \lambda \right) \notin
D^{2}}\left( B_{\lambda \mu }^{{\cal H}^{r+1}}\right) ^{2}  \nonumber \\
&&+\frac{1}{2}\sum_{\alpha =r+2}^{n}\sum_{i,j=1}^{r}\left( B_{ii}^{{\cal H}%
^{\alpha }}\right) ^{2}+\sum_{j=1}^{k}\sum_{\alpha =r+2}^{n}\sum_{\alpha
_{j}<\beta _{j}}B_{\alpha _{j}\alpha _{j}}^{{\cal H}^{\alpha }}B_{\beta
_{j}\beta _{j}}^{{\cal H}^{\alpha }}-\sum_{j=1}^{k}\sum_{\alpha
=r+2}^{n}\sum_{\alpha _{j}<\beta _{j}}\left( B_{\alpha _{j}\beta _{j}}^{%
{\cal H}^{\alpha }}\right) ^{2}  \label{eq-RM-3}
\end{eqnarray}%
By using the binomial expansion in (\ref{eq-RM-3}), we obtain%
\begin{eqnarray}
\sum_{j=1}^{k}{\rm scal}^{{\cal H}}\left( L_{j}\right)  &\geq &\sum_{j=1}^{k}%
{\rm scal}^{{\cal R}}\left( \pi _{\ast }L_{j}\right) +\frac{\varepsilon }{2}+%
\frac{1}{2}\sum_{\left( \lambda ,\mu \neq \lambda \right) \notin
D^{2}}\left( B_{\lambda \mu }^{{\cal H}^{r+1}}\right) ^{2}  \nonumber \\
&&+\frac{1}{2}\sum_{j=1}^{k}\sum_{\alpha =r+2}^{n}\left( \sum_{\alpha
_{j}\in D_{j}}B_{\alpha _{j}\alpha _{j}}^{{\cal H}^{\alpha }}\right) ^{2}+%
\frac{1}{2}\sum_{\alpha =r+2}^{n}\sum_{\left( \lambda ,\mu \right) \notin
D^{2}}\left( B_{\lambda \mu }^{{\cal H}^{\alpha }}\right) ^{2}.
\label{eq-RM-4}
\end{eqnarray}%
equation (\ref{eq-RM-4}) can be written as%
\begin{eqnarray}
\sum_{j=1}^{k}{\rm scal}^{{\cal H}}\left( L_{j}\right)  &\geq &\sum_{j=1}^{k}%
{\rm scal}^{{\cal R}}\left( \pi _{\ast }L_{j}\right) +\frac{\varepsilon }{2}+%
\frac{1}{2}\sum_{\left( \lambda ,\mu \neq \lambda \right) \notin
D^{2}}\left( B_{\lambda \mu }^{{\cal H}^{r+1}}\right) ^{2}+\frac{1}{2}%
\sum_{\alpha =r+2}^{n}\sum_{\left( \lambda ,\mu \right) \notin D^{2}}\left(
B_{\lambda \mu }^{{\cal H}^{\alpha }}\right) ^{2}  \nonumber \\
&&+\frac{1}{2}\sum_{j=1}^{k}\sum_{\alpha =r+2}^{n}\left( \sum_{\alpha
_{j}\in D_{j}}B_{\alpha _{j}\alpha _{j}}^{{\cal H}^{\alpha }}\right) ^{2}.
\label{eq-RM-5}
\end{eqnarray}%
From (\ref{eq-RM-5}), we obtain%
\begin{equation}
\sum_{j=1}^{k}{\rm scal}^{{\cal H}}\left( L_{j}\right) \geq \sum_{j=1}^{k}%
{\rm scal}^{{\cal R}}\left( \pi _{\ast }L_{j}\right) +\frac{\varepsilon }{2}
\label{eq-RM-6}
\end{equation}%
From (\ref{eq-epsilon}) and (\ref{eq-RM-6}), we obtain%
\begin{equation}
\sum_{j=1}^{k}{\rm scal}^{{\cal H}}\left( L_{j}\right) \geq \sum_{j=1}^{k}%
{\rm scal}^{{\cal R}}\left( \pi _{\ast }L_{j}\right) +{\rm scal}^{{\cal H}}-%
{\rm scal}^{{\cal R}}-c\left( r_{1},\ldots ,r_{k}\right) \left\Vert {\rm %
trace\,}B^{{\cal H}}\right\Vert ^{2}  \label{eq-RM-7}
\end{equation}%
which implies%
\begin{equation}
{\rm scal}^{{\cal H}}-\sum_{j=1}^{k}{\rm scal}^{{\cal H}}\left( L_{j}\right)
\leq {\rm scal}^{{\cal R}}-\sum_{j=1}^{k}{\rm scal}^{{\cal R}}\left( \pi
_{\ast }L_{j}\right) +c\left( r_{1},\ldots ,r_{k}\right) \left\Vert {\rm %
trace\,}B^{{\cal H}}\right\Vert ^{2}  \label{eq-RM-8}
\end{equation}%
The equality case in (\ref{eq-GCFI-RM}) holds if and only if equalities are
satisfied in (\ref{eq-2b1b2}) and (\ref{eq-RM-6}). According to Lemma \ref%
{Lemma 2}, equality in (\ref{eq-2b1b2}) holds if and only if%
\[
b_{1}+b_{2}=b_{3}=\cdots =b_{k+1}=\cdots =b_{\gamma +1}=a_{r},
\]%
that is,%
\[
\sum_{\alpha _{1}\in D_{1}}a_{\alpha _{1}}=\sum_{\alpha _{2}\in
D_{2}}a_{\alpha _{2}}=\cdots =\sum_{\alpha _{r}\in D_{r}}a_{\alpha
_{r}}=a_{_{\left( \sum_{j=1}^{k}r_{j}\right) +1}}=\cdots =a_{r},
\]%
equality in (\ref{eq-RM-6}) holds if and only if%
\begin{eqnarray*}
B_{\lambda \mu }^{{\cal H}^{r+1}} &=&0,\quad \left( \lambda ,\mu \neq
\lambda \right) \notin D^{2},\quad \lambda ,\mu \in \left\{ 1,\ldots
,r\right\}  \\
B_{\lambda \mu }^{{\cal H}^{\alpha }} &=&0,\quad \alpha =r+2,\ldots ,n,\quad
\lambda ,\mu \in \left\{ 1,\ldots ,r\right\}  \\
\sum_{\alpha _{j}\in D_{j}}B_{\alpha _{j}\alpha _{j}}^{{\cal H}^{\alpha }}
&=&0,\quad \alpha =r+2,\ldots ,n,\quad j=1,\ldots k.
\end{eqnarray*}%
Which complete the proof.
\end{proof}

\begin{theorem}
Let $\pi :\left( M_{1},g_{1}\right) \rightarrow \left( M_{2}\left( c\right)
,g_{2}\right) $ be a Riemannian map from a Riemannian manifold to a real
space form. Let $r_{1},\ldots ,r_{k}$ be intigers $\geq 2$ satisfying $%
r_{1}<r$, $r_{1}+\cdots +r_{k}\leq r$. For $p\in M_{1}$, let $L_{j}$ be an $%
r_{j}$-plane section of ${\cal H}_{p}$, $j=1,\ldots ,k$. Then we have%
\begin{equation}
{\rm scal}^{{\cal H}}-\sum_{j=1}^{k}{\rm scal}^{{\cal H}}\left( L_{j}\right)
\leq \frac{1}{2}c\left\{ r\left( r-1\right) -\sum_{j=1}^{k}r_{j}\left(
r_{j}-1\right) \right\} +c(r_{1},\ldots ,r_{k})\left\Vert {\rm trace\,}B^{%
{\cal H}}\right\Vert ^{2}.  \label{eq-GRSFRM-(1)}
\end{equation}%
Equality case follows Theorem {\rm \ref{Theorem RM 1}}.
\end{theorem}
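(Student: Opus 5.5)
The plan is to specialize Theorem \ref{Theorem RM 1} to the case where the target manifold is a real space form $M_{2}(c)$. The only input needed is the evaluation of the range-space scalar curvatures ${\rm scal}^{{\cal R}}$ and ${\rm scal}^{{\cal R}}(\pi_{\ast}L_{j})$ from the curvature tensor (\ref{eq-RSF}) of $M_{2}(c)$.

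First, fix $p\in M_{1}$ and an orthonormal basis $\{h_{1},\ldots ,h_{r}\}$ of ${\cal H}_{p}$. By (\ref{riemannian_map}), $\{\pi_{\ast}h_{1},\ldots ,\pi_{\ast}h_{r}\}$ is an orthonormal basis of ${\cal R}$. Using (\ref{eq-RSF}) and the definition (\ref{eq-KN-product}) of the Kulkarni--Nomizu product, one obtains for orthonormal $\pi_{\ast}h_{i},\pi_{\ast}h_{j}$ with $i\neq j$
\[
g_{2}\left( R^{M_{2}}(\pi_{\ast}h_{i},\pi_{\ast}h_{j})\pi_{\ast}h_{j},\pi_{\ast}h_{i}\right) =c .
\]
The diagonal terms $i=j$ vanish. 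Summing over $i,j$ in (\ref{2scalH}) then gives $2\,{\rm scal}^{{\cal R}}=c\,r(r-1)$.

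Second, apply the same computation to each $L_{j}$, which is spanned by $r_{j}$ of the basis vectors with $\pi_{\ast}L_{j}$ spanned by their images. Summing in (\ref{eq-scalh-scalR-LJ-1}) gives $2\,{\rm scal}^{{\cal R}}(\pi_{\ast}L_{j})=c\,r_{j}(r_{j}-1)$, and hence
\[
{\rm scal}^{{\cal R}}-\sum_{j=1}^{k}{\rm scal}^{{\cal R}}(\pi_{\ast}L_{j})=\frac{c}{2}\Big\{ r(r-1)-\sum_{j=1}^{k}r_{j}(r_{j}-1)\Big\} .
\]
Substituting this into (\ref{eq-GCFI-RM}) yields (\ref{eq-GRSFRM-(1)}) directly.

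For the equality case, observe that the substitution is an identity, so equality in (\ref{eq-GRSFRM-(1)}) holds exactly when it holds in (\ref{eq-GCFI-RM}). The characterization is therefore inherited verbatim from Theorem \ref{Theorem RM 1}.

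The only point needing care is the sign and normalization convention. The paper writes (\ref{eq-RSF}) as $R^{M}=\frac{c}{2}(g\circledast g)$ and evaluates curvature as $R(X,Y,Y,X)$. I will check that with (\ref{eq-KN-product}) this evaluation gives $+c$, not $-c$ or $2c$, on an orthonormal pair, so that the constant $\frac{c}{2}$ in front of the bracket matches the statement. This is consistent with the analogous computation (\ref{eq-GRSF-(1.1)}) already used for submersions. There is no genuine obstacle beyond this bookkeeping.
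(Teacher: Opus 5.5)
Your proposal is correct and follows the paper's argument. You evaluate ${\rm scal}^{{\cal R}}=\frac{c}{2}r(r-1)$ and ${\rm scal}^{{\cal R}}(\pi_{\ast}L_{j})=\frac{c}{2}r_{j}(r_{j}-1)$ from (\ref{eq-RSF}), using that $\pi_{\ast}$ is an isometry on ${\cal H}_{p}$, then substitute into (\ref{eq-GCFI-RM}) and inherit the equality case because the substitution is an identity. Your normalization check also comes out as you expected: with (\ref{eq-KN-product}), $(g\circledast g)(X,Y,Y,X)=2$ for an orthonormal pair, so $R^{M_{2}}(X,Y,Y,X)=c$.
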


\begin{proof}
By (\ref{eq-RSF}) (\ref{2scalH}) and (\ref%
{eq-scalh-scalR-LJ-1}), we get%
\begin{equation}
\sum_{j=1}^{k}{\rm scal}^{{\cal R}}\left( L_{j}\right) =\frac{c}{2}%
\sum_{j=1}^{k}r_{j}\left( r_{j}-1\right) ,\quad {\rm scal}^{{\cal R}}=\frac{c%
}{2}r\left( r-1\right) .  \label{eq-GRSFRM-(1.1)}
\end{equation}%
In view of (\ref{eq-GRSFRM-(1.1)}) and (\ref{eq-GCFI-RM}), we get (\ref%
{eq-GRSFRM-(1)}).
\end{proof}

\begin{theorem}
Let $\pi :\left( M_{1},g_{1}\right) \rightarrow \left( M_{2}\left( c\right)
,g_{2}\right) $ be a Riemannian map from a Riemannian manifold to a complex
space form. Let $r_{1},\ldots ,r_{k}$ be intigers $\geq 2$ satisfying $%
r_{1}<r$, $r_{1}+\cdots +r_{k}\leq r$. For $p\in M_{1}$, let $L_{j}$ be an $%
r_{j}$-plane section of ${\cal H}_{p}$, $j=1,\ldots ,k$. Then we have%
\begin{eqnarray}
{\rm scal}^{{\cal H}}-\sum_{j=1}^{k}{\rm scal}^{{\cal H}}\left( L_{j}\right)
&\leq &\frac{c}{8}\left\{ r\left( r-1\right) -\sum_{j=1}^{k}r_{j}\left(
r_{j}-1\right) \right\}   \nonumber \\
&&+\frac{3c}{8}\left\{ \left\Vert P\right\Vert ^{2}-2\sum_{j=1}^{k}\theta
\left( L_{j}\right) \right\} +c(r_{1},\ldots ,r_{k})\left\Vert {\rm trace\,}%
B^{{\cal H}}\right\Vert ^{2},  \label{eq-GCFGCSFRM}
\end{eqnarray}%
where $\theta \left( L_{j}\right) =\sum_{1\leq i<j\leq r_{j}}g\left( P\pi
_{\ast }Z_{i},\pi _{\ast }Z_{j}\right) ^{2}$. Equality case follows Theorem 
{\rm \ref{Theorem RM 1}}.
\end{theorem}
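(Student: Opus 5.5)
The proof will follow the pattern of the real space form case: we substitute the curvature tensor of the complex space form (\ref{eq-GCSF}) into the general inequality (\ref{eq-GCFI-RM}) of Theorem \ref{Theorem RM 1}. That inequality bounds ${\rm scal}^{{\cal H}}-\sum_j {\rm scal}^{{\cal H}}(L_j)$ by ${\rm scal}^{{\cal R}}-\sum_j {\rm scal}^{{\cal R}}(\pi_\ast L_j)+c(r_1,\ldots,r_k)\Vert {\rm trace\,}B^{{\cal H}}\Vert^2$. So it suffices to compute the two range scalar curvatures explicitly when $M_2=M_2(c)$.

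\textbf{Step 1: sectional curvatures on the range.} Let $\{Z_1,\ldots,Z_r\}$ be an orthonormal basis of ${\cal H}_p$ adapted to $L_1,\ldots,L_k$. Then $\{\pi_\ast Z_i\}$ is an orthonormal basis of ${\cal R}$ by (\ref{riemannian_map}). Expanding (\ref{eq-GCSF}) with the Kulkarni--Nomizu product (\ref{eq-KN-product}) and the symmetric product (\ref{eq-KN-symm-product}) gives, for $i\neq j$,
\[
g_2\left(R^{M_2}(\pi_\ast Z_i,\pi_\ast Z_j)\pi_\ast Z_j,\pi_\ast Z_i\right)=\frac{c}{4}\left\{1+3\,g_2(J\pi_\ast Z_i,\pi_\ast Z_j)^2\right\}.
\]
Using the decomposition (\ref{decompose_GCSF_RM}), adapted to ${\cal R}\oplus{\cal R}^\perp$ with $P$ the ${\cal R}$-component, we have $g_2(J\pi_\ast Z_i,\pi_\ast Z_j)=g_2(P\pi_\ast Z_i,\pi_\ast Z_j)$. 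The normalization of the $J^\flat$ terms in (\ref{eq-GCSF}) must be tracked carefully so that the coefficients come out as $c/4$ and $3c/4$. This is a routine but error-prone sign and coefficient computation, and I expect it to be the main point needing care.

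\textbf{Step 2: summing.} Summing over all ordered pairs in (\ref{2scalH}) gives
\[
{\rm scal}^{{\cal R}}=\frac{c}{8}\left\{r(r-1)+3\Vert P\Vert^2\right\},\qquad \Vert P\Vert^2=\sum_{i,j=1}^{r}g_2(P\pi_\ast Z_i,\pi_\ast Z_j)^2.
\]
Here the diagonal terms vanish because $J$ is skew. Summing only over pairs inside each $L_j$ via (\ref{eq-scalh-scalR-LJ-1}), and using the symmetry of $g_2(P\pi_\ast Z_i,\pi_\ast Z_j)^2$ to convert ordered pairs into $i<j$, gives
\[
\sum_{j=1}^k{\rm scal}^{{\cal R}}(\pi_\ast L_j)=\frac{c}{8}\left\{\sum_{j=1}^k r_j(r_j-1)+6\sum_{j=1}^k\theta(L_j)\right\}.
\]
This is the exact analogue of (\ref{eq-GCFGCSF-(1.1)})--(\ref{eq-GCFGCSF-(1.2)}) in the submersion case.

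\textbf{Step 3: substitution.} Subtracting the two expressions gives
\[
{\rm scal}^{{\cal R}}-\sum_j{\rm scal}^{{\cal R}}(\pi_\ast L_j)=\frac{c}{8}\left\{r(r-1)-\sum_j r_j(r_j-1)\right\}+\frac{3c}{8}\left\{\Vert P\Vert^2-2\sum_j\theta(L_j)\right\}.
\]
Inserting this into (\ref{eq-GCFI-RM}) yields (\ref{eq-GCFGCSFRM}).

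Steps 2 and 3 are pointwise identities and introduce no new inequality. Hence equality in (\ref{eq-GCFGCSFRM}) holds exactly when equality holds in (\ref{eq-GCFI-RM}), that is, under the conditions of Theorem \ref{Theorem RM 1}.
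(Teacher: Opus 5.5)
Your proposal is correct and is essentially the paper's own proof: compute ${\rm scal}^{\cal R}$ and $\sum_j{\rm scal}^{\cal R}(\pi_\ast L_j)$ from the complex space form curvature, substitute into (\ref{eq-GCFI-RM}), and inherit the equality case from Theorem \ref{Theorem RM 1}; you also correctly use $P$ and $\theta$ where the paper's proof carelessly writes $Q$ and $\Psi$. One caution on Step 1: with the paper's literal definition (\ref{eq-KN-symm-product}) one has $J^{\flat}\circledcirc J^{\flat}=2\,J^{\flat}\circledast J^{\flat}$, so expanding (\ref{eq-GCSF}) actually gives $\frac{c}{4}\{1-3g_2(X,JY)^2\}$, and you should instead quote the standard identity $g_2(R^{M_2}(X,Y)Y,X)=\frac{c}{4}\{1+3g_2(X,JY)^2\}$ for orthonormal $X,Y$ rather than claim it follows from that expansion.
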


\begin{proof}
By (\ref{eq-GCSF}) (\ref{2scalH}) and (\ref%
{eq-scalh-scalR-LJ-1}), we get%
\begin{equation}
\sum_{j=1}^{k}{\rm scal}^{{\cal R}}\left( \pi _{\ast }L_{j}\right) =\frac{c}{%
8}\left\{ \sum_{j=1}^{k}r_{j}\left( r_{j}-1\right) +6\sum_{j=1}^{k}\Psi
\left( L_{j}\right) \right\} ,  \label{eq-GCFGCSFRM-(1.1)}
\end{equation}%
\begin{eqnarray}
{\rm scal}^{{\cal R}} &=&\frac{c}{8}\left\{ r\left( r-1\right)
+3\sum_{i,j=1}^{r}g\left( QV_{i},V_{j}\right) ^{2}\right\}  \nonumber \\
&=&\frac{c}{8}\left\{ r\left( r-1\right) +3\left\Vert Q\right\Vert
^{2}\right\} .  \label{eq-GCFGCSFRM-(1.2)}
\end{eqnarray}%
In view of (\ref{eq-GCFGCSFRM-(1.1)}), (\ref{eq-GCFGCSFRM-(1.2)}) and (\ref%
{eq-GCFI-RM}), we get (\ref{eq-GCFGCSFRM}).
\end{proof}

\end{document}